\documentclass[12pt]{article}
\usepackage{amssymb}
\usepackage{amsmath,amsthm}
\usepackage[latin1]{inputenc}
\usepackage{graphicx}
\usepackage{hyperref}
\usepackage{enumerate}
\usepackage{tikz}
\usepackage{tkz-graph}
\usepackage{mathrsfs}
\usepackage{verbatim}

\hypersetup{colorlinks=true, linkcolor=blue, citecolor=blue, urlcolor=blue}

\newtheorem{remark}{Remark}

\newtheorem{lemma}[remark]{Lemma}
\newtheorem{theorem}[remark]{Theorem}
\newtheorem{proposition}[remark]{Proposition}
\newtheorem{corollary}[remark]{Corollary}

\setlength{\topmargin}{-.75 in}
\setlength{\oddsidemargin}{0.25in}
\setlength{\evensidemargin}{0.25in}
\setlength{\textwidth}{6in}
\textheight=9in

\title{On the super domination number  of  lexicographic product graphs}

\author{M. Dettlaff$^{1}$,  M. Lema\'{n}ska$^{1}$, J. A. Rodr\'{\i}guez-Vel\'{a}zquez$^{2}$,
 R. Zuazua$^{3}$
\\
\\
$^1${\small Department of Technical Physics and Applied Mathematics} \\ {\small Gdansk University of Technology, ul. Narutowicza 11/12 80-233 Gdansk, Poland }\\ {\small magda\@@mifgate.mif.pg.gda.pl}
\\
$^2${\small Departament d'Enginyeria Inform\`atica i Matem\`atiques
}\\
{\small Universitat Rovira i Virgili,  Av. Pa\"{\i}sos Catalans 26,
43007 Tarragona, Spain.} \\{\small juanalberto.rodriguez\@@urv.cat}
\\
$^3${\small Departamento de Matem\'{a}ticas, Universidad Nacional Aut\'{o}noma de M\'{e}xico} \\{\small Ciudad Universitaria, 04510 Coyoacan, Mexico DF, Mexico} \\{\small ritazuazua\@@ciencias.unam.mx}
}

\begin{document}
\maketitle

\begin{abstract}
The   neighbourhood of a vertex $v$ of a graph $G$ is the set $N(v)$ of all vertices 
adjacent to $v$ in $G$.  For $D\subseteq V(G)$ we define $\overline{D}=V(G)\setminus D$. A  set  $D\subseteq V(G)$ is  called  a  super  dominating  set   if for every vertex  $u\in \overline{D}$,  there exists $v\in D$ such that  $N(v)\cap \overline{D}=\{u\}$. The   super domination number  of $G$ is the minimum cardinality among all super dominating sets in $G$.
In this  article we obtain closed formulas and tight bounds for the super dominating number of lexicographic product graphs in terms of invariants of the factor graphs involved in the product. As a consequence of the study, we show that the problem of finding the super domination number  of a graph is NP-Hard.
\end{abstract}

{\it Keywords: Domination number; super domination number; domination in graphs; lexicographic product; NP-Hard.}  

{\it AMS Subject Classification numbers: 05C69; 	05C76}

\section{Introduction}
\label{sectIntro}

The {\it neighbourhood} of a vertex $v$ of a graph $G$ is the set $N(v)$ of all vertices 
adjacent to $v$ in $G$.  For $D\subseteq V(G)$ we define $\overline{D}=V(G)\setminus D$. A set $D\subseteq V(G)$  is {\it dominating} in $G$ if every vertex in $\overline{D}$ 
has at
least one neighbour in $D$, \textit{i.e}., $N(u)\cap D\ne \emptyset$ for every $u\in \overline{D}$. The {\it domination number} of $G$, denoted by  
$\gamma (G)$, is the minimum cardinality among all dominating sets in $G$. A dominating set of cardinality $\gamma(G)$ is called a $\gamma(G)$-set. The reader is referred to the books \cite{Haynes1998a,Haynes1998}
 for details on domination in graphs.

The study of super domination in graphs was introduced in \cite{MR3396565}. 
 A  set  $D\subseteq V(G)$ is  called  a  \textit{super  dominating  set}   if for every vertex  $u\in \overline{D}$,  there exists $v\in D$ such that  
\begin{equation}\label{DefinitionPrivateNeighbour}
N(v)\cap \overline{D}=\{u\}.
\end{equation}
If $u$ and $v$ satisfy \eqref{DefinitionPrivateNeighbour}, then we say that $v$ is an \textit{external  private neighbour of $u$ with respect to} $\overline{D}$. The {\it super domination number} of $G$,  denoted by $\gamma_{\rm sp} (G)$, is the minimum cardinality among all super dominating sets in $G$.  A super dominating set of cardinality $\gamma_{\rm sp}(G)$ is called a $\gamma_{\rm sp}(G)$-set.  

In this paper we develop the theory of super domination in lexicographic product graphs. The  paper is structured as follows. Section \ref{Basic} covers  basic results on the super domination number of a graph, including a characterization of graphs of order $n$ with $\gamma_{\rm sp}(G)=n-1.$ These graphs play an important role when studying the super domination number of  lexicographic product graphs. 
Section \ref{SectionLexicographicProduct} is devoted to the study of the super domination number of lexicographic product graphs. In particular, in Subsection \ref{SubsectionGeneralbounds} we obtain general bounds for the super domination number of  lexicographic product graphs in terms of some invariants of the factor graphs involved in the product. In Subsection \ref{SubsectionClosedFormulas} we 
show that the problem of finding the super domination number  of a graph is NP-Hard. We also study several families of graphs for which the bounds obtained previously are achieved. Finally, in Subsection \ref{SubsectionJoin} we obtain formulas for the  super domination number of join graphs.

For the remainder of the paper, definitions will be introduced whenever a concept is needed.

\section{Some remarks on the super domination number}
\label{Basic}

In this section we recall basic properties of the super domination number and give the full characterisation of graphs  of order $n$ with $\gamma_{\rm sp}(G)=n-1.$ To begin with, we introduce some 
notation and terminology. The {\it  closed
neighbourhood} of a vertex  $v$ is defined as $N[v]=N(v)\cup \{v\}$ and the degree of   $v $ is $d(v)=|N(v)|$. 
If $G$ has $n$ vertices and $d(v) =n-1$, then $v$ is a {\it universal} vertex of $G$.

\begin{theorem}{\rm \cite{MR3396565}} 
\label{theorem1} 
Let $G$ be a graph of order $n$. Then the following assertions hold.
\begin{itemize}
\item $\gamma_{\rm sp}(G)=1$ if and only if $G\cong K_1$  or $G\cong K_2$.
\item $\gamma_{\rm sp}(G)=n$ if and only if $G$ is an empty graph.
\item $\gamma_{\rm sp}(G)\ge  \lceil \frac{n}{2}\rceil$.
\end{itemize}
\end{theorem}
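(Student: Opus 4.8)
The plan is to prove the three assertions independently, and I will start with the lower bound $\gamma_{\rm sp}(G)\ge\lceil n/2\rceil$, which carries the only real idea in the statement. Let $D$ be an arbitrary super dominating set. By definition, for each $u\in\overline{D}$ there is a vertex $f(u)\in D$ with $N(f(u))\cap\overline{D}=\{u\}$; fixing one such choice for every $u$ gives a map $f\colon\overline{D}\to D$. This map is injective, because $f(u)=f(u')=v$ forces $\{u\}=N(v)\cap\overline{D}=\{u'\}$ and hence $u=u'$. Therefore $|\overline{D}|\le|D|$, i.e.\ $n-|D|\le|D|$, so $|D|\ge n/2$, and since $|D|$ is an integer, $|D|\ge\lceil n/2\rceil$. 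Applying this to a $\gamma_{\rm sp}(G)$-set yields the bound.

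For the first assertion, suppose $D=\{v\}$ is a super dominating set. Then for every $u\in\overline{D}$ we must have $N(v)\cap\overline{D}=\{u\}$; since the set $N(v)\cap\overline{D}$ does not depend on $u$, this is impossible unless $|\overline{D}|\le 1$, so $n\le 2$. If $n=1$ then $G\cong K_1$, and if $n=2$, writing $\overline{D}=\{u\}$, the equality $N(v)\cap\overline{D}=\{u\}$ gives $vu\in E(G)$, so $G\cong K_2$. The converse is immediate: in $K_1$ the set consisting of the unique vertex is (vacuously) a super dominating set, and in $K_2$ with $V=\{v,u\}$ we have $N(v)\cap\overline{\{v\}}=\{u\}$, so $\{v\}$ is a super dominating set.

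For the second assertion, if $G$ is empty then $N(v)=\emptyset$ for all $v$, so $N(v)\cap\overline{D}=\emptyset\ne\{u\}$ for every $u$; thus \eqref{DefinitionPrivateNeighbour} can be satisfied only when $\overline{D}=\emptyset$, i.e.\ $D=V(G)$, whence $\gamma_{\rm sp}(G)=n$. Conversely, if $G$ has an edge $xy$, take $D=V(G)\setminus\{x\}$, so that $\overline{D}=\{x\}$; then $y\in D$ and $N(y)\cap\overline{D}=\{x\}$, so $D$ is a super dominating set and $\gamma_{\rm sp}(G)\le n-1<n$. Every step here is routine, so there is no genuine obstacle; the only point worth isolating is the injectivity of $f$ in the first paragraph, which is precisely what pins the bound down to $\lceil n/2\rceil$ rather than something weaker.
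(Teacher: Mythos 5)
Theorem~\ref{theorem1} is quoted from \cite{MR3396565} and the paper gives no proof of it, so there is no internal argument to compare against. Your proof is correct and is the standard one: the key point is exactly the injection $f\colon\overline{D}\to D$ assigning to each $u\in\overline{D}$ a private witness, which yields $|\overline{D}|\le|D|$ and hence the $\lceil n/2\rceil$ bound, while the two characterizations follow by direct inspection of the definition as you describe.
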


It is well known that for any graph $G$ without isolated vertices, $1\leq \gamma(G)\le  \lceil \frac{n}{2}\rceil $, so from the theorem above  we have that for any connected graph $G$,
\begin{equation}\label{TrivialBoundsontheSuperDominationNumber}
1\leq \gamma(G)\le  \left \lceil \frac{n}{2} \right\rceil \le \gamma_{\rm sp}(G)\leq n-1.
\end{equation}

Graphs with $\gamma_{\rm sp}(G)= n-1$ will play an important role in the study of the super domination number of lexicographic product graphs. 
In order to characterize these graphs we need to prove the following two lemmas. 

\begin{lemma}\label{lemma2} Let $G$ be a graph of order $n$. If $\gamma_{\rm sp}(G)=n-1,$ then $G$ is $P_4$-free and $C_4$-free.
\end{lemma}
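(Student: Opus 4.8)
The plan is to argue by contraposition: I will show that if $G$ contains an induced $P_4$ or an induced $C_4$, then $G$ has a super dominating set of cardinality $n-2$, which forces $\gamma_{\rm sp}(G)\le n-2<n-1$. In both situations the construction is the same in spirit — delete the two ``middle'' vertices of the forbidden configuration, leaving the other two to act as external private neighbours with respect to the complement.

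Concretely, suppose first that $v_1v_2v_3v_4$ induces a path in $G$, and put $D=V(G)\setminus\{v_2,v_3\}$, so $\overline D=\{v_2,v_3\}$. Since the path is induced, $v_1\sim v_2$ while $v_1\not\sim v_3$, hence $N(v_1)\cap\overline D=\{v_2\}$; symmetrically $N(v_4)\cap\overline D=\{v_3\}$. As $v_1,v_4\in D$, the set $D$ is super dominating. Suppose instead that $v_1v_2v_3v_4v_1$ induces a $4$-cycle, and put $D=V(G)\setminus\{v_1,v_2\}$, so $\overline D=\{v_1,v_2\}$. Because the cycle is induced, $v_1\not\sim v_3$ and $v_2\not\sim v_4$; therefore $N(v_4)\cap\overline D=\{v_1\}$ and $N(v_3)\cap\overline D=\{v_2\}$, with $v_3,v_4\in D$, so again $D$ is super dominating. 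In either case $\gamma_{\rm sp}(G)\le|D|=n-2$, contradicting $\gamma_{\rm sp}(G)=n-1$.

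This is essentially a direct verification rather than a deep argument, so I do not anticipate a real obstacle; the only points requiring care are (i) interpreting ``$P_4$-free'' and ``$C_4$-free'' as referring to \emph{induced} subgraphs — note for instance that $K_4$ contains $C_4$ as a (non-induced) subgraph yet still has $\gamma_{\rm sp}=n-1$, so the induced reading is the correct one — and (ii) choosing \emph{which} two vertices of the configuration to remove so that the remaining two genuinely become external private neighbours of the deleted ones. With those choices fixed as above, the non-adjacencies guaranteed by the configuration being induced do all the work.
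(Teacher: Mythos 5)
Your proof is correct and follows essentially the same approach as the paper: exhibit a super dominating set of size $n-2$ by deleting two vertices of the induced $P_4$ or $C_4$ and checking that the remaining two vertices of the configuration serve as external private neighbours. The only cosmetic difference is that in the path case the paper deletes the two endpoints rather than the two middle vertices (either choice works), and your remark that the $K_4$ example forces the \emph{induced} reading of ``$P_4$-free'' and ``$C_4$-free'' is a correct and worthwhile observation.
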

\begin{proof}
Suppose that there exists $V'=\{x,y,w,z\}\subseteq V(G)$ such that the subgraph of $G$ induced by $V'$ is isomorphic to a path $P_4=(x,y,w,z)$ or a cycle  $C_4=(x,y,w,z,x)$. Then $V(G) \setminus \{x,z\}$ is a super dominating set of $G$, which implies  that $\gamma_{\rm sp}(G)\leq  n-2$.
\end{proof}
\begin{lemma}\label{lemma3} Let $G$ be a connected graph of order $n$. If there is no universal vertex in $G$, then $\gamma_{\rm sp}(G)\leq n-2.$
\end{lemma}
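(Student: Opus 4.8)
The plan is to reduce matters to the situation already treated in Lemma~\ref{lemma2}: I will show that a connected graph $G$ with no universal vertex must contain an induced $P_4$ or an induced $C_4$, and then invoke the construction in the proof of Lemma~\ref{lemma2} to produce a super dominating set of cardinality $n-2$. Note first that if $G$ has no universal vertex then $n\ge 3$, since the only connected graphs on one or two vertices, $K_1$ and $K_2$, do have a universal vertex; in particular $\Delta(G)\le n-2$.

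The structural step I would argue by contradiction: suppose $G$ contains neither an induced $P_4$ nor an induced $C_4$. Being connected, $P_4$-free and of order at least $2$, the graph $G$ has disconnected complement (this is the classical cograph characterisation applied to $G$ itself), so $G$ is a join $G=G_1+G_2$ of two non-empty graphs. Now I claim that $G_1$ or $G_2$ is complete: if both contained a non-adjacent pair, say $x_1\not\sim x_2$ in $G_1$ and $y_1\not\sim y_2$ in $G_2$, then $\{x_1,y_1,x_2,y_2\}$ would induce a $C_4$ in $G$, because the four ``cross'' pairs are edges of the join while $x_1x_2$ and $y_1y_2$ are non-edges, contradicting our assumption. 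Say $G_1$ is complete; then every vertex of $G_1$ is adjacent to all the remaining $(|G_1|-1)+|G_2|=n-1$ vertices, so $G$ has a universal vertex, a contradiction.

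Hence $G$ has an induced subgraph on $V'=\{x,y,w,z\}$ isomorphic to $P_4=(x,y,w,z)$ or to $C_4=(x,y,w,z,x)$, and then, exactly as in the proof of Lemma~\ref{lemma2}, the set $V(G)\setminus\{x,z\}$ is super dominating: since $V'$ is an \emph{induced} subgraph, $y$ is an external private neighbour of $x$ with respect to $\overline{D}=\{x,z\}$ (as $y\sim x$ and $y\not\sim z$) and $w$ is an external private neighbour of $z$ (as $w\sim z$ and $w\not\sim x$). Therefore $\gamma_{\rm sp}(G)\le n-2$. The only delicate point is the structural claim — that a connected graph without a universal vertex contains an induced $P_4$ or $C_4$ — and everything there rests on the join decomposition $G=G_1+G_2$; if one prefers to avoid quoting the cograph characterisation, this can be replaced by a short direct induction on $n$ (a connected $P_4$-free graph with $n\ge 2$ is a join), or by citing that the $\{P_4,C_4,2K_2\}$-free graphs are precisely the threshold graphs, each connected one of which has a universal vertex.
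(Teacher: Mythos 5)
Your proof is correct, but the key structural step is argued quite differently from the paper. Both proofs ultimately rest on the same fact -- a connected graph that is $P_4$-free and $C_4$-free must have a universal vertex -- and both finish by producing the super dominating set $V(G)\setminus\{x,z\}$ from an induced $P_4$ or $C_4$ as in Lemma~\ref{lemma2}. The paper establishes the structural fact by a purely local, self-contained argument: it takes a vertex $x$ of maximum degree, uses connectivity to find a vertex $z$ at distance two with common neighbour $y$, shows that $N(x)\subseteq N[y]$ (otherwise an induced $P_4$ or $C_4$ appears on $\{w,x,y,z\}$), and derives the contradiction $d(y)\ge d(x)+1$. You instead argue globally via cograph theory: a connected $P_4$-free graph of order at least $2$ is a join $G_1+G_2$, and $C_4$-freeness forces one side of the join to be complete, whence a universal vertex exists. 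Your route is shorter but imports the nontrivial characterisation of cographs (which the paper neither cites nor needs); the paper's route is longer but entirely elementary and keeps the lemma self-contained, which matters since the paper reuses Lemma~\ref{lemma3} iteratively inside the proof of Theorem~\ref{familyF}. One caveat on your closing aside: the appeal to threshold graphs does not apply as stated, because $G$ is only known to be $\{P_4,C_4\}$-free, not $2K_2$-free (e.g.\ $K_1+2K_2$ is connected, $P_4$-free and $C_4$-free but contains an induced $2K_2$, hence is not threshold); fortunately this is only an optional alternative and your main join argument is sound.
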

\begin{proof} Suppose that $\gamma_{\rm sp}(G)=n-1$ and $G$ does not have a universal vertex. Let $x$ be a vertex of   maximum degree in $G$. Since $d(x)<n-1$, there exists $z$ such that the distance between $x$ and $z$ is equal to two, and let denote by $y$ a common neighbour of $x$ and $z$. Suppose now that there exists $w\in N(x)$ such that $yw\not \in E(G)$. In such a case, the subgraph induced by the set $\{w,x,y,z\}$ is isomorphic to $P_4$ or $C_4$, which is a contradiction with Lemma~\ref{lemma2}. Hence, $N(x)\subseteq N[y]$. Furthermore, $z\in N(y)\setminus N(x)$, which implies $d(y)\geq d(x)+1$, which is a contradiction.
\end{proof}

To describe graphs with $\gamma_{\rm sp}(G)=n-1,$ we define a family $\mathcal{F}$ of graphs in the following way. 

\begin{itemize}
\item Let $k$ and $k'$ be two positive integers such that $k'=k$ or $k'=k-1$.
\item Let $\{G_i=(V_i,E_i):\; i=1,\dots, k\}$ be a family  of complete graphs.
\item Let $\{G_i'=(V_i',E_i'):\; i=1,\dots, k'\}$ be a family  of empty graphs.
\item Let $X_1=\bigcup_{i=1}^k E_i$ and  $X_2=\{xy\colon x\in V_i,y\in V_j' \textrm{ and }1\leq i\leq j\leq k'\}$.

\item For $k\ge 2$  we define  $X_3= \{xy\colon x\in V_i, y\in V_j \textrm{ and } 1\le i<j\le k\}$, while for $k=1$ we assume that $X_3=\emptyset$. 
\item With the notation above, we say that $G\in \mathcal{F}$ if    $V(G)=\left(\bigcup_{i=1}^k V_i \right)\cup  \left(\bigcup_{i=1}^{k'} V_i' \right)$ and $E(G)=X_1\cup X_2\cup X_3$ for some  integers $k$ and $k'$.  
\end{itemize}

\begin{figure}[h]
\centering
\begin{tikzpicture}[transform shape, inner sep = .7mm]
\draw(-1,-1) -- (1,1)--(1,-1)--(-1,1)--(-2,0)--(-1,-1)--(1,-1)--(1,1)--(-1,1)--(-1,-1);
\draw(1,-1)--(3.5,0)--(1,1);\draw(1,1)--(2,0)--(1,-1);\draw(-1,1)--(2,0)--(-1,-1);\draw(-1,1)--(3.5,0)--(-1,-1);
\node  at  (0,-1.5) {};

\node [draw=black, shape=circle, fill=white] at  (-1,-1) {};
\node [draw=black, shape=circle, fill=white] at  (-1,1) {};
\node [draw=black, shape=circle, fill=white] at  (1,-1) {};
\node [draw=black, shape=circle, fill=white] at  (1,1) {};
\node [draw=black, shape=circle, fill=white] at  (-2,0) {};
\node [draw=black, shape=circle, fill=white] at  (2,0) {};
\node [draw=black, shape=circle, fill=white] at  (3.5,0) {};
\node [right] at (3.6,0) {$^{x'}$};
\node [right] at (2.1,0) {$^{y'}$};
\node [left] at (-2,0) {$^{a'}$};
\node [above] at (-1,1) {$^{a}$};
\node [below] at (-1,-1.1) {$^{b}$};
\node [above] at (1,1) {$^{x}$};
\node [below] at (1,-1.1) {$^{y}$};
\end{tikzpicture} 

\caption{A graphs belonging to the family $\mathcal{F}$, where $k=k'=2$, $V_1=\{a,b\}$, $V'_1=\{a'\}$, $ V_2=\{x,y\}$ and  $V'_2=\{x',y'\} $.}\label{figExampleOfFamilyF}
\end{figure}
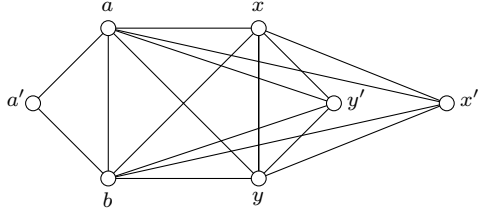

Figure \ref{figExampleOfFamilyF} shows an example of graph belonging to the family $\mathcal{F}$.
The following remark is a direct consequence of the definition of $\mathcal{F}$.

\begin{remark}\label{RemarkFamilyF}Let $G\in \mathcal{F}$. Then the following assertions hold for $x,y\in V(G)$.
\begin{itemize}
\item If $x\in V_i$, then $N[x]=\left(\bigcup_{j=1}^k V_j\right) \cup \left(\bigcup_{j=i}^{k'}V_j'\right)$.
\item If $y\in V_j'$, then $N(y)=\bigcup_{i=1}^j V_i$.
\item If $x,y\in V_j$, then $N[x]=N[y]$.

\item If  $x,y\in V_j'$, then $N(x)=N(y)$.

\item If $i<j$, $x\in V_i$ and $y\in V_j$, then $N[y]\subseteq N[x]$.
\item If $i<j$, $x\in V_i'$ and $y\in V_j'$, then $N(x)\subseteq N(y)$.

\item If $x\in V_i$ and $y\in V_j'$, then $N(y)\subseteq N(x)$
\end{itemize}
\end{remark}
 
\begin{theorem}
\label{familyF}
Let $G$ be a connected graph of order $n$.  Then $\gamma_{\rm sp}(G)=n-1$ if and only if $G\in \mathcal{F}.$
\end{theorem}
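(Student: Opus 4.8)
The plan is to reformulate the hypothesis $\gamma_{\rm sp}(G)=n-1$ as a statement about nested neighbourhoods, and then match that statement against the definition of $\mathcal F$. Note first that for any connected graph $G$ of order $n\ge 2$ one has $\gamma_{\rm sp}(G)\le n-1$, since $V(G)\setminus\{u\}$ is a super dominating set for every $u$ (as $u$ has a neighbour). Also, every superset of a super dominating set is super dominating, so $\gamma_{\rm sp}(G)=n-1$ if and only if no set $V(G)\setminus\{u,w\}$ with $u\neq w$ is super dominating. Unwinding the definition, $V(G)\setminus\{u,w\}$ is super dominating exactly when there is $v\notin\{u,w\}$ with $v\sim u$, $v\not\sim w$ \emph{and} $v'\notin\{u,w\}$ with $v'\sim w$, $v'\not\sim u$; that is, exactly when $N(u)\setminus\{w\}\not\subseteq N(w)$ and $N(w)\setminus\{u\}\not\subseteq N(u)$. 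Hence $\gamma_{\rm sp}(G)=n-1$ if and only if for every pair of distinct vertices $u,w$ we have $N(u)\setminus\{w\}\subseteq N(w)$ or $N(w)\setminus\{u\}\subseteq N(u)$; equivalently, if $uw\in E(G)$ then $N[u]$ and $N[w]$ are comparable under inclusion, and if $uw\notin E(G)$ then $N(u)$ and $N(w)$ are comparable. Call this property $(\star)$; it is clearly inherited by every induced subgraph.

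For the implication $G\in\mathcal F\Rightarrow\gamma_{\rm sp}(G)=n-1$: such a $G$ is connected (a vertex of $V_1$ is universal, by Remark~\ref{RemarkFamilyF}), and using that same remark one checks $(\star)$ directly by inspecting the possible positions of $u$ and $w$ among the blocks — both in one $V_i$, both in one $V'_j$, in distinct $V_i,V_j$, in distinct $V'_i,V'_j$, or in some $V_i$ and some $V'_j$ — in each case the relevant neighbourhoods are nested (for the last, adjacent, case one passes from the stated nesting of open neighbourhoods to nesting of closed ones). Then $\gamma_{\rm sp}(G)=n-1$ by the previous paragraph.

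The reverse implication is the substantial one, and I would prove it by induction on $n$, the base case $n=2$ being $G=K_2\in\mathcal F$. Assume $G$ connected with $(\star)$ and $n\ge 3$. If $G=K_n$ we are done ($V_1=$ all but one vertex, $V'_1=$ the remaining one). Otherwise, by Lemma~\ref{lemma3} the set $U$ of universal vertices of $G$ is non-empty, and $U\neq V(G)$. Put $G'=G-U$; it satisfies $(\star)$, and $(\star)$ rules out an induced $2K_2$ (two disjoint edges $ab,cd$ of $G'$ lying in different components would induce one in $G$, contradicting $(\star)$ for the non-adjacent pair $a,c$), so at most one component of $G'$ contains an edge. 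Hence $G'=H_0\cup I$ with $I$ the set of isolated vertices of $G'$ and $H_0$ either empty or a connected graph on at least two vertices; when $H_0\neq\emptyset$ it satisfies $(\star)$, has fewer than $n$ vertices, so $\gamma_{\rm sp}(H_0)=|V(H_0)|-1$ and $H_0\in\mathcal F$ by the inductive hypothesis. A crucial observation is that $H_0\neq\emptyset$ forces $I\neq\emptyset$: otherwise $G'=H_0$, and for $x$ in the first clique block of $H_0$ Remark~\ref{RemarkFamilyF} gives $N_{H_0}[x]=V(H_0)$, so $N_G[x]=U\cup N_{H_0}[x]=V(G)$, forcing $x\in U$, a contradiction.

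It then remains to reassemble $G$ from $U$, $H_0$ and $I$. If $H_0=\emptyset$ then $I\neq\emptyset$ and $G$ is a clique on $U$ completely joined to an independent set on $I$, so $G\in\mathcal F$ with $k=k'=1$, $V_1=U$, $V'_1=I$. If $H_0\neq\emptyset$, let $A_1,\dots,A_p$ and $B_1,\dots,B_q$ be the clique and independent blocks witnessing $H_0\in\mathcal F$; I claim $G\in\mathcal F$ via clique blocks $V_1:=U,V_2:=A_1,\dots,V_{p+1}:=A_p$ and independent blocks $V'_1:=I,V'_2:=B_1,\dots,V'_{q+1}:=B_q$. The index condition holds because $q\in\{p,p-1\}$ gives $q+1\in\{p+1,p\}$; $\bigcup_i V_i=U\cup\bigcup_i A_i$ is a clique; each $V'_j$ is independent; and the staircase adjacency $V_i\sim V'_j\iff i\le j$ holds since for $i,j\ge 2$ it coincides with the corresponding relation inside $H_0$, while $V'_1=I$ is complete to $V_1=U$ and has no edges to $V_2,\dots,V_{p+1}\subseteq V(H_0)$ because $I$ and $V(H_0)$ lie in different components of $G-U$; checking $E(G)=X_1\cup X_2\cup X_3$ for this partition is then routine. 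The main obstacle is precisely this reassembly step — verifying every defining condition of $\mathcal F$, most delicately the constraint $k'\in\{k,k-1\}$ and the staircase adjacency, and disposing of the degenerate sub-configurations — where the observation "$H_0\neq\emptyset\Rightarrow I\neq\emptyset$" is what rules out the one shape (a connected $G$ whose deletion of all universal vertices stays connected on at least two vertices) that would otherwise fail to fit the template.
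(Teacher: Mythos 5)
Your proof is correct, and although it arrives at the same layered decomposition as the paper (the universal vertices $V_1$, then the vertices adjacent to exactly $V_1$, then recurse on what remains), it rests on a different key lemma and is organized as a genuine induction rather than an iterative peeling. The paper works directly with super dominating sets: it applies Lemma~\ref{lemma3} to the residual graph $H_2$ and declares a contradiction if $H_2$ has no universal vertex, which silently requires both that $H_2$ is connected (Lemma~\ref{lemma3} assumes connectivity) and that a super dominating set of $H_2$ of size $|V(H_2)|-2$ extends to one of $G$ of size $n-2$; neither point is argued. You instead replace the hypothesis $\gamma_{\rm sp}(G)=n-1$ by the pairwise condition $(\star)$ (comparability of closed neighbourhoods for adjacent pairs, of open neighbourhoods for non-adjacent pairs), which is manifestly hereditary under taking induced subgraphs; the induced-$2K_2$-freeness it implies shows that $G-U$ has at most one non-trivial component, which is precisely the connectivity issue the paper glosses over, and your observation that $H_0\neq\emptyset$ forces $I\neq\emptyset$ handles the constraint $k'\in\{k,k-1\}$ that the paper relegates to a footnote. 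The blocks you produce coincide with the paper's ($U=V_1$, $I=V_1'$, $H_0=H_2$), so the two proofs identify the same structure; the trade-off is that the paper's version is shorter but leaves the heredity, connectivity and extension steps to the reader, while yours is longer but each step is checkable, and $(\star)$ is a clean, reusable characterization of the extremal graphs in its own right.
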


\begin{proof}
From Remark \ref{RemarkFamilyF} we deduce that if $G\in \mathcal{F}$, then $\gamma_{\rm sp}(G)=n-1$. 

From now on we assume that $\gamma_{\rm sp}(G)=n-1$. Thus, by Lemma~\ref{lemma3} we can claim that $G$ has at least one universal vertex. Let $V_1$ be the set of universal vertices of $G$ and  $V_1'=\{x\in V(G)\colon N(x)=V_1\}$. If $V(G)=V_1\cup V_1'$, then $G\in \mathcal{F}$. 
Now, if $V(G)\setminus (V_1\cup V_1')\ne \emptyset$, then 
  we denoted by $H_2$ the subgraph of $G$ induced by $V(G)\setminus (V_1\cup V_1')$.  If the subgraph $H_2$  has no universal vertex, then by Lemma~\ref{lemma3} there exists a super dominating set $D$ of $H_2$ such that $|D|\leq |V(H_2)|-2$, which is  a contradiction. So $H_2$ has at least one universal vertex.  
Let $V_2$ be the set of universal vertices of $H_2$ and  $V_2'=\{x\in V(H_2)\colon N(x)=V_2\}$. If
 $V(H_2)=V_2\cup V_2'$, then $H_2\in \mathcal{F}$, which also implies that $G\in \mathcal{F}$\footnote{Notice that if $V(H_2)=V_2\cup V_2'$, then $k=2$ and if $V_2'=\emptyset$, then $k'=1$, otherwise $k'=2$.}. 
Analogously, if  $V(H_2)\setminus (V_2\cup V_2')\ne \emptyset$, then we denote by $H_3$ the 
  subgraph of $H_2$ induced by $V(H_2)\setminus (V_2\cup V_2')$. 
 Next we repeat this process for $H_3$  to conclude that $H_3\in \mathcal{F}$ and, since $G$ is a finite graph, we continue the process until 
 $V(H_k)=V_k\cup V_k'$ for some $k$, where $V_k'$ may be empty, to conclude that $G\in \mathcal{F}$.
\end{proof}

\section{Super domination in lexicographic product of graphs}\label{SectionLexicographicProduct}

Let $G$ be a graph of order $n$ such that $V(G)=\{u_1,\ldots ,u_n\}$ and let $\mathcal{H}=\{H_1,H_2,\ldots,H_n\}$ be  an ordered family formed by $n$ graphs such that  $H_i$ corresponds to $u_i$ for every $i$.  The \emph{lexicographic product} of $G$ and $\mathcal{H}$ is the graph $G \circ \mathcal{H}$, such that $V(G \circ \mathcal{H})=\bigcup_{u_i \in V(G)} (\{u_i\} \times V(H_i))$ and $(u_i,v_r)(u_j,v_s) \in E(G \circ \mathcal{H})$ if and only if $u_iu_j \in E(G)$ or $i=j$ and $v_rv_s \in E(H_i)$. 
Figure \ref{figExampleOfLexiFamily} shows the lexicographic product of $P_3=(u_1,u_2,u_3)$ and the ordered family of graphs $\{P_4,K_2,P_3\}$, and the lexicographic product of $P_4=(u_1,u_2,u_3,u_4)$ and the family $\{H_1,H_2,H_3,H_4\}$, where $H_1 \cong H_4 \cong K_1$ and $H_2 \cong H_3 \cong K_2$. In general, we can construct the graph $G\circ\mathcal{H}$ by taking one copy of each $H_i\in\mathcal{H}$ and joining by an edge every vertex of $H_i$ with every vertex of $H_j$ for every $u_i u_j\in E(G)$.

\begin{figure}[!ht]
\centering
\begin{tikzpicture}[transform shape, inner sep = .7mm]
\pgfmathsetmacro{\espacio}{1};
\node [draw=black, shape=circle, fill=white] (v5) at (3,0) {};
\node [draw=black, shape=circle, fill=white] (v6) at (3,3*\espacio) {};
\draw[black] (v5) -- (v6);
\foreach \ind in {1,...,4}
{
\pgfmathsetmacro{\yc}{(\ind-1)*\espacio};
\node [draw=black, shape=circle, fill=white] (v\ind) at (0,\yc) {};
\draw[black] (v5) -- (v\ind);
\draw[black] (v6) -- (v\ind);
\ifthenelse{\ind>1}
{
\pgfmathtruncatemacro{\bind}{\ind-1};
\draw[black] (v\ind) -- (v\bind);
}
{};
}
\node [draw=black, shape=circle, fill=white] (v7) at (6,0) {};
\node [draw=black, shape=circle, fill=white] (v8) at (6,1.5*\espacio) {};
\node [draw=black, shape=circle, fill=white] (v9) at (6,3*\espacio) {};
\draw[black] (v7) -- (v8) -- (v9);
\foreach \ind in {7,...,9}
{
\draw[black] (v5) -- (v\ind);
\draw[black] (v6) -- (v\ind);
}
\end{tikzpicture}
\hspace{0.5cm}
\begin{tikzpicture}[transform shape, inner sep = .7mm]

\draw(-1,-1) -- (1,1)--(1,-1)--(-1,1)--(-2,0)--(-1,-1)--(1,-1)--(2,0)--(1,1)--(-1,1)--(-1,-1);
\node  at  (0,-1.5) {};

\node [draw=black, shape=circle, fill=white] at  (-1,-1) {};
\node [draw=black, shape=circle, fill=white] at  (-1,1) {};
\node [draw=black, shape=circle, fill=white] at  (1,-1) {};
\node [draw=black, shape=circle, fill=white] at  (1,1) {};
\node [draw=black, shape=circle, fill=white] at  (-2,0) {};
\node [draw=black, shape=circle, fill=white] at  (2,0) {};

\end{tikzpicture} 
\caption{The lexicographic product graphs $P_3\circ\{P_4,K_2,P_3\}$ and $P_4\circ\{H_1,H_2,H_3,H_4\}$, where $H_1 \cong H_4 \cong K_1$ and $H_2 \cong H_3 \cong K_2$.}\label{figExampleOfLexiFamily}
\end{figure}
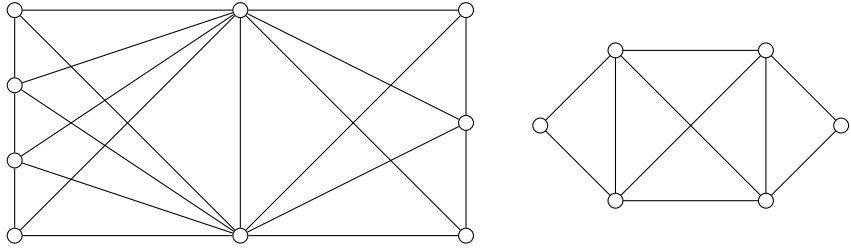

We will restrict our study to two particular cases. First, the traditional lexicographic product graph, where $H_i \cong H$ for every $i \in \{1,\ldots,n\}$, which is denoted as $G \circ H$ for simplicity \cite{Hammack2011,Imrich2000}.  
The other particular case we will focus on is the join of $G$ and $H$. The \emph{join graph} $G+H$\label{g join} is defined as the graph obtained from disjoint graphs $G$ and $H$ by taking one copy of $G$ and one copy of $H$ and joining by an edge each vertex of $G$ with each vertex of $H$ \cite{Harary1969,Zykov1949}. 
Note that $G+H\cong K_2\circ\{G,H\}$. The join operation is commutative and associative. 

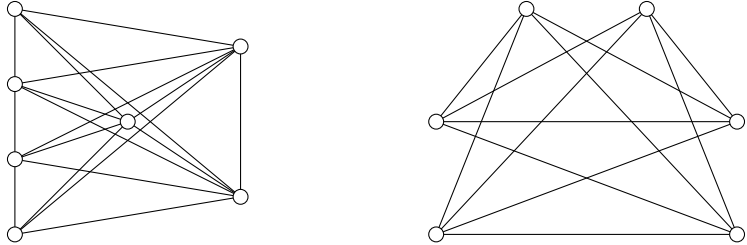
\begin{figure}[h]
\centering
\begin{tabular}{cccccc}

\begin{tikzpicture}[transform shape, inner sep = .7mm]

\draw(0,0) -- (0,3);
\draw(1.5,1.5) -- (3,0.5) -- (3,2.5) -- cycle;
\draw(1.5,1.5) -- (0,0);
\draw(3,0.5) -- (0,0);
\draw(3,2.5) -- (0,0);
\draw(1.5,1.5) -- (0,1);
\draw(3,0.5) -- (0,1);
\draw(3,2.5) -- (0,1);
\draw(1.5,1.5) -- (0,2);
\draw(3,0.5) -- (0,2);
\draw(3,2.5) -- (0,2);
\draw(1.5,1.5) -- (0,3);
\draw(3,0.5) -- (0,3);
\draw(3,2.5) -- (0,3);

\node [draw=black, shape=circle, fill=white] at  (0,0) {};
\node [draw=black, shape=circle, fill=white] at  (0,1) {};
\node [draw=black, shape=circle, fill=white] at  (0,2) {};
\node [draw=black, shape=circle, fill=white] at  (0,3) {};
\node [draw=black, shape=circle, fill=white] at  (1.5,1.5) {};
\node [draw=black, shape=circle, fill=white] at  (3,0.5) {};
\node [draw=black, shape=circle, fill=white] at  (3,2.5) {};

\end{tikzpicture} & & \hspace*{0.7cm} & &
\begin{tikzpicture}[transform shape, inner sep = .7mm]
\draw(0,0) -- (4,0);
\draw(0,1.5) -- (4,1.5);
\draw(0,0) -- (4,1.5);
\draw(0,1.5) -- (4,0);
\draw(1.2,3) -- (0,0);
\draw(1.2,3) -- (0,1.5);
\draw(1.2,3) -- (4,0);
\draw(1.2,3) -- (4,1.5);
\draw(2.8,3) -- (0,0);
\draw(2.8,3) -- (0,1.5);
\draw(2.8,3) -- (4,0);
\draw(2.8,3) -- (4,1.5);

\node [draw=black, shape=circle, fill=white] at  (0,0) {};
\node [draw=black, shape=circle, fill=white] at  (0,1.5) {};
\node [draw=black, shape=circle, fill=white] at  (4,0) {};
\node [draw=black, shape=circle, fill=white] at  (4,1.5) {};
\node [draw=black, shape=circle, fill=white] at  (1.2,3) {};
\node [draw=black, shape=circle, fill=white] at  (2.8,3) {};

\end{tikzpicture} \\
 
\end{tabular}
\caption{Two join graphs:  $P_4+C_3 \cong K_2\circ \{P_4,C_3\}$ and $N_2+N_2+N_2 \cong K_3\circ N_2$.}
\label{ex join}
\end{figure}

Moreover, complete $k$-partite graphs,  $$K_{p_1,p_2,\dots,p_k}\cong K_n \circ \{N_{p_1},N_{p_2},\dots ,N_{p_k}\}\cong N_{p_1}+N_{p_2}+\cdots +N_{p_k},$$ are typical examples of join graphs, where $N_{p_i}$ denotes the empty graph of order $p_i$. The particular case   illustrated in Figure \ref{ex join} (right hand side), is no other than the complete $3$-partite graph $K_{2,2,2}$.
   
Notice that  for any $g\in G$ and any graph $H$,  the subgraph of $G\circ H$ induced by $\{g\}\times V(H)$ is isomorphic to $H$.

\begin{remark} Let $G$ and $H$ be two graphs. Then the following assertions hold.

\begin{itemize}
\item $G\circ H$ is connected if and only if   $G$ is connected.
\item  If $G=G_1\cup \ldots \cup G_t,$ then $G\circ H= (G_1\circ H) \cup \ldots \cup (G_t \circ H).$ 
\end{itemize}
\end{remark}

According to the remark above,  we can restrict ourselves to the case of lexicographic product graphs $G\circ H$ for which $G$ is connected. For basic properties of the lexicographic product of two graphs we suggest the handbook by
Hammack, Imrich and Klav{\v{z}}ar
\cite{Hammack2011}.

A main problem in the study of product of graphs consists of finding exact values or sharp
bounds for specific parameters of the product of two graphs and express
these in terms of invariants of the factor graphs. In particular,   we cite the following works on domination theory of lexicographic product graphs. For instance, the domination number was studied in \cite{MR3363260,Nowakowski1996}, the Roman domination number was studied in \cite{SUmenjak:2012:RDL:2263360.2264103}, the rainbow domination number was studied in \cite{MR3057019}, while the doubly connected domination number was studied in \cite{MR3200151}. 

 To begin our study we need to introduce the following additional notation.
Given   $g\in V(G) $ and $W\subseteq V(G)\times V(H)$ we define $$W_g=\{h:\, (g,h)\in W\}.$$ 
For simplicity, the neighbourhood of $(x,y)\in V(G)\times V(H)$ will be denoted by $N(x,y)$ instead of $N((x,y))$.

\begin{lemma}\label{important} Let $G$ be a graph and let $H$ be a nonempty graph. If $W$ is a $\gamma_{\rm sp}(G \circ H)$-set, then $|W_g|\ge \gamma_{\rm sp}(H)$ for every  $g\in V(G)$.
\end{lemma}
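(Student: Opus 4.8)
The plan is to show that the restriction $W_g$ of a super dominating set $W$ of $G\circ H$ to a single ``column'' $\{g\}\times V(H)$ must itself be a super dominating set of the copy of $H$ sitting on $g$, from which the bound $|W_g|\ge\gamma_{\rm sp}(H)$ follows immediately by definition of $\gamma_{\rm sp}(H)$. Fix $g\in V(G)$ and write $D=W_g\subseteq V(H)$; I want to prove $D$ is a super dominating set of $H$. So let $u\in\overline{D}=V(H)\setminus D$; equivalently $(g,u)\in\overline{W}$. Since $W$ is a super dominating set of $G\circ H$, there exists a vertex $(x,y)\in W$ that is an external private neighbour of $(g,u)$ with respect to $\overline{W}$, i.e.
\[
N(x,y)\cap\overline{W}=\{(g,u)\}.
\]

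The key step is to force $x=g$. Suppose $x\ne g$. Because $H$ is nonempty, pick an edge $v_1v_2\in E(H)$. The neighbourhood structure of the lexicographic product gives, for any $h\in V(H)$, that $(x,h)$ is adjacent to $(x,v_1)$ or $(x,v_2)$ (whichever is $\ne h$, and if $h\notin\{v_1,v_2\}$ then to both). In particular, the whole column over $x$ other than possibly one vertex is ``seen'' by $(x,v_1)$ and $(x,v_2)$. More to the point: since $N(x,y)\cap\overline{W}$ is a singleton and $(g,u)$ lies in it with $g\ne x$, every other vertex of the column over $x$ adjacent to $(x,y)$ must lie in $W$. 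But $(x,y)$ is adjacent to every vertex $(x,h)$ with $v_1v_2$-type connectivity; concretely $(x,y)$ is adjacent to at least one of $(x,v_1),(x,v_2)$, and also $x g\in E(G)$ would be needed for $(x,y)\sim(g,u)$ — so indeed $xg\in E(G)$. Then $(x,y)$ is adjacent to the entire column $\{g\}\times V(H)$, so $N(x,y)\cap\overline{W}\supseteq\{g\}\times\overline{W_g}=\{g\}\times\overline{D}$. Since this intersection equals $\{(g,u)\}$, we get $\overline{D}\subseteq\{u\}$, hence $\overline{D}=\{u\}$, i.e. $|D|=|V(H)|-1$. But then $|W_g|=|V(H)|-1\ge\lceil |V(H)|/2\rceil\ge\gamma_{\rm sp}(H)$ by Theorem~\ref{theorem1}, and we are done in that case; otherwise the supposition $x\ne g$ together with $\overline{D}\ne\{u\}$ is impossible, so $x=g$.

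With $x=g$ established (in the remaining case), the private-neighbour condition reads $N(g,y)\cap\overline{W}=\{(g,u)\}$. I intersect both sides with the column $\{g\}\times V(H)$: the left side becomes $\{g\}\times\big(N_H(y)\cap\overline{W_g}\big)=\{g\}\times\big(N_H(y)\cap\overline{D}\big)$ (using that within a column the adjacency in $G\circ H$ is exactly the adjacency in $H$), and the right side becomes $\{(g,u)\}$ since $g\in\{g\}$. Therefore $N_H(y)\cap\overline{D}=\{u\}$, which says precisely that $y\in D$ is an external private neighbour of $u$ with respect to $\overline{D}$ in $H$. As $u\in\overline{D}$ was arbitrary, $D=W_g$ is a super dominating set of $H$, so $|W_g|\ge\gamma_{\rm sp}(H)$.

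The main obstacle is the case analysis at $x\ne g$: one has to be careful that a private neighbour of $(g,u)$ might a priori live in a different column, and rule this out using nonemptiness of $H$ together with the size-one constraint on $N(x,y)\cap\overline{W}$. The cleanest way to package this is probably a short preliminary observation: if $(x,y)\in W$ with $x\ne g$ satisfies $N(x,y)\cap\overline{W}=\{(g,u)\}$, then $xg\in E(G)$ forces $\{g\}\times\overline{W_g}\subseteq N(x,y)\cap\overline{W}$, hence $|\overline{W_g}|\le 1$; combined with Theorem~\ref{theorem1} this already yields $|W_g|\ge n_H-1\ge\gamma_{\rm sp}(H)$, so the inequality holds regardless. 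I would state this dichotomy explicitly to keep the argument tidy.
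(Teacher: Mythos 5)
Your argument is correct and is essentially the paper's proof in a different packaging: both rest on the single observation that a private neighbour $(x,y)$ with $x\ne g$ must satisfy $xg\in E(G)$ and hence is adjacent to the entire column $\{g\}\times V(H)$, which forces $|\overline{W}_g|\le 1$; otherwise every private neighbour lies in the column and $W_g$ is a super dominating set of $H$. (The paper runs the same dichotomy globally, by assuming $|W_g|\le |V(H)|-2$ and picking two vertices $h,h'\in\overline{W}_g$ with identical neighbourhoods outside the column.) One slip to fix: the chain $|V(H)|-1\ge\lceil |V(H)|/2\rceil\ge\gamma_{\rm sp}(H)$ has its second inequality backwards, since Theorem~\ref{theorem1} asserts $\gamma_{\rm sp}(H)\ge\lceil |V(H)|/2\rceil$; the correct justification, which you do give in your closing paragraph, is that $\gamma_{\rm sp}(H)\le |V(H)|-1$ because $H$ is nonempty (second item of Theorem~\ref{theorem1}).
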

\begin{proof}Since $H$ is a nonempty graph, if $|W_g|\ge |V(H)|-1$, then we are done. Assume that $|W_g|\le |V(H)|-2$ and let $h,h'\in \overline{W}_g$.  Since 
$$N(g,h)\cap \left [(V(G)\setminus  \{g\})\times V(H) \right)]=N(g,h')\cap \left [(V(G)\setminus  \{g\})\times V(H) \right)],$$
 we can conclude that $(g,h)$ (and also $(g,h')$) has a  private neighbour with respect to  $\overline{W}$ which belongs to $\left(\{g\}\times V(H)\right)\cap W$. Hence, $h$ (and also $h'$) has a   private neighbour with respect to  $\overline{W}_g$ which belongs to $W_g$. Therefore, $W_g$ is a super dominating set for $H$, which implies that $|W_g|\ge \gamma_{\rm sp}(H)$.  
\end{proof}

\begin{lemma}
\label{adjacentcopies}
Let $G$ and $H$ be two graphs. Let $xx'\in E(G)$ and   $W$   a $\gamma_{\rm sp}(G\circ H)$-set. If $\overline{W}_x\not=\emptyset$ and $\overline{W}_{x'}\not=\emptyset$, then $|\overline{W}_x|=|\overline{W}_{x'}|=1.$
\end{lemma}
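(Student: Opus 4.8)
The plan is to proceed by contradiction. Suppose $|\overline{W}_x|\ge 2$, and fix two distinct vertices $(x,h_1),(x,h_2)\in\overline{W}$ as well as a vertex $(x',h')\in\overline{W}$, which exists by the hypothesis $\overline{W}_{x'}\neq\emptyset$. Since $W$ is a super dominating set, $(x,h_1)$ has an external private neighbour $(a,b)\in W$ with $N(a,b)\cap\overline{W}=\{(x,h_1)\}$. The idea is to exploit the rigid adjacency structure of the lexicographic product to show that no such $(a,b)$ can exist, which will force $|\overline{W}_x|\le 1$; by the symmetry of the hypotheses in $x$ and $x'$ the same bound holds for $\overline{W}_{x'}$, and combined with nonemptiness we conclude $|\overline{W}_x|=|\overline{W}_{x'}|=1$.

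Concretely, I would first record two elementary consequences of the definition of $G\circ H$: (i) if $ax\in E(G)$, then $\{x\}\times V(H)\subseteq N(a,b)$ for every $b\in V(H)$; and (ii) if $xx'\in E(G)$, then $\{x'\}\times V(H)\subseteq N(x,b)$ for every $b\in V(H)$. Next I split into cases according to the first coordinate $a$ of the private neighbour. If $a=x$, then by (ii) the vertex $(a,b)=(x,b)$ is adjacent to $(x',h')$, and since $(x',h')\in\overline{W}$ and $(x',h')\neq(x,h_1)$ this contradicts $N(a,b)\cap\overline{W}=\{(x,h_1)\}$. If $a\neq x$, then because $(x,h_1)\in N(a,b)$ the definition of the product forces $ax\in E(G)$, so by (i) the vertex $(a,b)$ is adjacent to $(x,h_2)$, and again $(x,h_2)\in\overline{W}$ with $(x,h_2)\neq(x,h_1)$ yields a contradiction. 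Hence $(x,h_1)$ has no external private neighbour in $W$, contradicting that $W$ is a super dominating set.

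I do not expect a substantial obstacle here; the only point that needs a little care is in the case $a=x$, where the contradiction must be obtained from a vertex of $\overline{W}$ lying in the fibre over $x'$ rather than over $x$: two vertices $(x,h_1)$ and $(x,h_2)$ in the same fibre over $x$ need not be adjacent to $(x,b)$, so one genuinely needs the hypothesis $\overline{W}_{x'}\neq\emptyset$ at this step (and, by symmetry, $\overline{W}_x\neq\emptyset$ when bounding $|\overline{W}_{x'}|$). Everything else is a direct unravelling of the lexicographic adjacency relation.
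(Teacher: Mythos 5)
Your proof is correct and takes essentially the same route as the paper: the paper compresses your two cases ($a=x$ and $a\neq x$) into the single containment $N(x',y'')\subseteq N(x',y')\cup N(x,y)$ for two vertices of $\overline{W}$ in one fibre and one in the adjacent fibre, which immediately rules out an external private neighbour. Your version just unpacks that containment explicitly; nothing further is needed.
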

\begin{proof}
Suppose that $(x,y),(x',y'),(x',y'')\in \overline{W}$. Since $xx'\in E(G)$, we have that $N(x',y'')\subseteq N(x',y')\cup N(x,y)$, which is a contradiction. Hence, $|\overline{W}_{x'}|\le 1$. Therefore, the results follows.
\end{proof}

\subsection{General bounds}\label{SubsectionGeneralbounds}

Recall that an \textit{independent  set} of a graph $G$ is a subset $S\subseteq V(G)$  such that no two vertices in $S$ represent an edge of $G$, \textit{i.e.},  $N(x)\cap S=\emptyset$, for every $x\in S$.  The  cardinality of a maximum independent   set of $G$ is called the \emph{independence number} of $G$ and is denoted by $\alpha(G)$.
We refer to an $\alpha(G)$-set in a graph $G$ as an independent set of cardinality $\alpha(G)$.

  Fink, Jacobson \cite{MR812672} defined \textit{k-independent set} of a graph $G$ as a set $S\subseteq V(G)$ such that the subgraph induced by $S$ has maximum degree at most $k-1$,  \textit{i.e.},  $|N(x)\cap S|\le k-1$, for every $x\in S$.  The  cardinality of a maximum $k$-independent  set of $G$ is called the \emph{k-independence number} of $G$ and is denoted by $\alpha_k(G)$. Obviously any $1$-independent set of $G$ is an independent set of $G$.

\begin{theorem}\label{MainTheorem}  For any   nonempty graph  $H$ of order $n'$ and for any graph $G$ of order~$n$, $$\gamma_{\rm sp}(G \circ H) \le \alpha(G) \gamma_{\rm sp}(H)+(n-\alpha(G))n'.$$
In particular,  if $\gamma_{\rm sp}(H)= n'-1$ and $H\not \cong K_{n'}$,   then $$\gamma_{\rm sp}(G \circ H) \le  nn'-\alpha_2(G).$$
   \end{theorem}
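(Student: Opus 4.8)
The plan is to construct an explicit super dominating set of $G\circ H$ and count its vertices. First I would fix a maximum independent set $S$ of $G$, so $|S|=\alpha(G)$, and for each $g\in S$ fix a $\gamma_{\rm sp}(H)$-set $D_g\subseteq V(H)$. I would then define
\[
W=\bigcup_{g\in S}\bigl(\{g\}\times D_g\bigr)\;\cup\;\bigcup_{g\in V(G)\setminus S}\bigl(\{g\}\times V(H)\bigr),
\]
so that $\overline{W}=\bigcup_{g\in S}(\{g\}\times \overline{D_g})$. Its cardinality is exactly $\alpha(G)\gamma_{\rm sp}(H)+(n-\alpha(G))n'$, giving the first inequality once we verify $W$ is super dominating. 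To do that, take $(g,h)\in\overline{W}$; then $g\in S$ and $h\in\overline{D_g}$. Since $D_g$ is a super dominating set of $H$, there is $h'\in D_g$ with $N_H(h')\cap\overline{D_g}=\{h\}$. The vertex $(g,h')\in W$ is the candidate external private neighbour: I need $N(g,h')\cap\overline{W}=\{(g,h)\}$. A neighbour of $(g,h')$ in $G\circ H$ lies either in another copy $\{g''\}\times V(H)$ with $g''g\in E(G)$ — but then $g''\notin S$ (as $S$ is independent), so that whole copy lies in $W$ — or in $\{g\}\times N_H(h')$; intersecting the latter with $\overline{W}=\bigcup_{f\in S}\{f\}\times\overline{D_f}$ gives $\{g\}\times(N_H(h')\cap\overline{D_g})=\{(g,h)\}$. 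Hence $W$ is super dominating.

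For the second statement, assume $\gamma_{\rm sp}(H)=n'-1$ and $H\not\cong K_{n'}$. Let $T$ be a maximum $2$-independent set of $G$, so $|T|=\alpha_2(G)$ and every vertex of $T$ has at most one neighbour inside $T$. Now define
\[
W'=V(G\circ H)\setminus\bigl(\{(t,h_t):t\in T\}\bigr),
\]
where for each $t\in T$ we pick a single vertex $h_t\in V(H)$; thus $|W'|=nn'-\alpha_2(G)$ and $\overline{W'}_t=\{h_t\}$ for $t\in T$, with $\overline{W'}_g=\emptyset$ otherwise. I must check each $(t,h_t)$ has an external private neighbour in $W'$. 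Since $\gamma_{\rm sp}(H)=n'-1$ and $H\not\cong K_{n'}$, Theorem \ref{theorem1} forces $n'\geq 2$, and in fact $H$ (being in $\mathcal F$ if connected, or handled directly) has, for any vertex $h$, some vertex $h^\ast$ with $N_H(h^\ast)\cap(V(H)\setminus\{h\})=\emptyset$, i.e. $N_H(h^\ast)\subseteq\{h\}$ — this is exactly the property making $V(H)\setminus\{h\}$ super dominating, so such $h^\ast$ exists for every $h$; moreover since $H\not\cong K_{n'}$ we can also arrange, when $t$ has a $T$-neighbour $t'$, to choose things so the private-neighbour condition still isolates $(t,h_t)$. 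Concretely, pick the external private neighbour of $(t,h_t)$ to be $(t,h_t^\ast)$ with $N_H(h_t^\ast)\subseteq\{h_t\}$. Its neighbours in $G\circ H$ are $\{t\}\times N_H(h_t^\ast)\subseteq\{(t,h_t)\}$ together with $\bigcup_{t g\in E(G)}\{g\}\times V(H)$; among the latter, $\overline{W'}$ meets only copies indexed by $T$-neighbours of $t$, of which there is at most one, say $t'$, contributing the single vertex $(t',h_{t'})$. The point where care is needed — the main obstacle — is ensuring this stray vertex $(t',h_{t'})$ does not spoil the private-neighbour condition: one resolves it by instead choosing, for the two endpoints of each edge inside $T$, external private neighbours that are \emph{not} adjacent across that edge, which is possible precisely because $H\not\cong K_{n'}$ (so $H$ has two nonadjacent vertices, and using Remark \ref{RemarkFamilyF}/the structure of $\mathcal F$ one can select $h_t,h_t^\ast,h_{t'},h_{t'}^\ast$ appropriately); isolated vertices $t\in T$ need no such adjustment. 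Assembling these choices shows $W'$ is a super dominating set, giving $\gamma_{\rm sp}(G\circ H)\le nn'-\alpha_2(G)$.

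Throughout, the first part is essentially bookkeeping once the correct set $W$ is written down; the genuinely delicate part is the edge-within-$T$ analysis in the second part, where the hypothesis $H\not\cong K_{n'}$ is used in an essential way to decouple the private neighbours at the two ends of such an edge.
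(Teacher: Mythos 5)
Your proof of the first inequality is correct and is essentially the paper's argument: the independence of $S$ in $G$ guarantees that every copy adjacent to a deficient copy is wholly contained in $W$, so the super domination condition inside each copy of $H$ lifts to $G\circ H$, and the count is immediate.

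The second part, however, has a genuine gap, in two places. First, you assert that because $V(H)\setminus\{h\}$ is super dominating there must be a vertex $h^\ast$ with $N_H(h^\ast)\subseteq\{h\}$. This misreads the definition: the condition $N(v)\cap \overline{D}=\{u\}$ with $\overline{D}=\{h\}$ only says that $h^\ast$ is adjacent to $h$; it imposes no restriction on the neighbours of $h^\ast$ outside $\overline{D}$. Already for $H\cong K_{1,n'-1}$ (which has $\gamma_{\rm sp}(H)=n'-1$) and $h$ a leaf, no vertex has its whole neighbourhood contained in $\{h\}$. Second, and more fatally, your plan for an edge $tt'$ inside the $2$-independent set $T$ is to keep the private neighbour of $(t,h_t)$ inside the copy of $t$ and to arrange that it is ``not adjacent across that edge'' to $(t',h_{t'})$. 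This is impossible in a lexicographic product: since $tt'\in E(G)$, \emph{every} vertex of $\{t\}\times V(H)$ is adjacent to \emph{every} vertex of $\{t'\}\times V(H)$, independently of the $H$-coordinates, so the stray vertex $(t',h_{t'})$ always lies in the neighbourhood of any candidate chosen from the copy of $t$. The correct move, and the one the paper makes, is to take the private neighbour of $(t,h_t)$ from the copy of $t'$ instead: fix a single nonuniversal vertex $y$ of $H$, set $h_t=y$ for all $t\in T$, and for $t$ with (unique) $T$-neighbour $t'$ choose $y'\in V(H)\setminus N[y]$ --- this is exactly where $H\not\cong K_{n'}$ is used. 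Then $(t',y')$ is adjacent to $(t,y)$ across the edge, is not adjacent to $(t',y)$ within its own copy because $y'\notin N[y]$, and meets no other deleted vertex because $N(t')\cap T=\{t\}$ by $2$-independence; vertices of $T$ with no $T$-neighbour are handled within their own copy as in your case (a). Without this change your construction does not yield a super dominating set.
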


\begin{proof}
Let $S_1 $ be an $\alpha(G)$-set and $S_2$ a $\gamma_{\rm sp}(H)$-set.  We claim that 
\begin{equation}\label{Definitionof-S}
S= \left (\bigcup_{x\in S_1}\{x\}\times S_2\right)  \cup \left(\bigcup_{x\notin S_1}\{x\}\times V(H)\right) 
\end{equation}
 is a super dominating set of $G\circ H.$ To see this we set $(x,y)\notin S.$ Hence, $x\in S_1$ and $y\in \overline{S_2}$, so that there exists $y'\in S_2$ such that $N(y')\cap \overline{S_2}=\{y\}$, which implies that  $(x,y')\in S$   and $N(x,y')\cap \overline{S}=\{(x,y)\}$. Thus, $S$ is a super dominating set of $G\circ H$ and, as a consequence,
$$\gamma_{\rm sp}(G\circ H)\leq |S|=|S_1||S_2|+(n-|S_1|)n'=\alpha(G) \gamma_{\rm sp}(H)+(n-\alpha(G))n'.$$ 

Now, let $\gamma_{\rm sp}(H)= n'-1$ and $H\not \cong K_{n'}$. In this case we take the sets $S_1$ and $S_2$ in a different manner, \textit{i.e.}, we take $S_1 $ as an $\alpha_2(G)$-set and $S_2=V(H)\setminus \{y\}$, where $y$ is a nonuniversal vertex of $H$.  We claim that the set $S$ defined by \eqref{Definitionof-S} is a super dominating set of $G\circ H.$ To see this we take $(x,y)\notin S.$ Since $x\in S_1$, we have two possibilities, namely, (a) $N(x)\cap S_1=\emptyset$ or (b)
there exists $x'\in V(G)\setminus \{x\}$ such that $N(x)\cap S_1=\{x'\}$. In case (a), for every $y'\in S_2$ we have that  $(x,y')\in S$  and $N(x,y')\cap \overline{S}=\{(x,y)\}$, while in case  (b),  for every $y'\in V(H)\setminus N[y]$ we have that  $(x',y')\in S$ and  $N(x',y')\cap \overline{S}=\{(x,y)\}$. 
Thus, $S$ is a super dominating set of $G\circ H$ and, as a consequence,
$$\gamma_{\rm sp}(G\circ H)\leq |S|=|S_1|(n'-1)+(n-|S_1|)n'=nn'-\alpha_2(G).$$ 
Therefore, the result follows.
\end{proof}

As we will show in Theorem \ref{TheoremEquality} and Propositions \ref{G-Complete}, \ref{Bipartite}, \ref{PropositionCycles} and \ref{PropositionPaths}, the bounds above are achieved by several families of graphs. 

Notice that the bound $\gamma_{\rm sp}(G \circ H) \le \alpha(G) \gamma_{\rm sp}(H)+(n-\alpha(G))n'$ is never better than 
$\gamma_{\rm sp}(G \circ H) \le  nn'-\alpha_2(G)$, as $\alpha(G)\le \alpha_2(G)$.

A {\it vertex cover} of  $G$ is a set $X\subseteq V(G)$  such that each 
edge of $G$ is incident to at least one vertex of $X$.   The {\it vertex cover 
number} $\tau(G)$ is the cardinality of a minimum vertex cover of $G$. A vertex 
cover of cardinality $\tau(G)$ is called a $\tau(G)$-set. 
The following well-known result, due to Gallai \cite{Gallai1959}, states the relationship between the independence number and the vertex cover number of a graph.
 
\begin{theorem}{\em\cite{Gallai1959}}{\rm (Gallai, 1959)}
\label{th gallai}
For any graph $G$ of order $n$, $\alpha(G)+\tau(G) = n.$
\end{theorem}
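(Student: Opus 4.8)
The plan is to establish the complementary relationship between vertex covers and independent sets, from which the identity $\alpha(G)+\tau(G)=n$ follows immediately by passing to extremal sets. The key observation is the following duality: a set $X\subseteq V(G)$ is a vertex cover of $G$ if and only if its complement $\overline{X}=V(G)\setminus X$ is an independent set of $G$. To see this, suppose first that $X$ is a vertex cover. If $\overline{X}$ were not independent, there would exist $u,v\in\overline{X}$ with $uv\in E(G)$; but then neither endpoint of the edge $uv$ lies in $X$, contradicting that $X$ covers every edge. Conversely, if $\overline{X}$ is independent, then for every edge $uv\in E(G)$ at least one of $u,v$ fails to lie in $\overline{X}$, hence lies in $X$, so $X$ is a vertex cover.

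Given this equivalence, I would argue as follows. Let $X$ be a $\tau(G)$-set, so $|X|=\tau(G)$ and, by the equivalence above, $\overline{X}$ is an independent set; therefore $\alpha(G)\ge |\overline{X}| = n-\tau(G)$, which gives $\alpha(G)+\tau(G)\ge n$. Conversely, let $S$ be an $\alpha(G)$-set, so $|S|=\alpha(G)$ and $S$ is independent; then $\overline{S}$ is a vertex cover, so $\tau(G)\le |\overline{S}| = n-\alpha(G)$, which gives $\alpha(G)+\tau(G)\le n$. Combining the two inequalities yields $\alpha(G)+\tau(G)=n$.

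There is essentially no obstacle here; the only point requiring minor care is the clean statement and verification of the complementation equivalence, since everything else is a one-line consequence of taking complements of extremal sets. I would present the equivalence as a short displayed claim, verify both directions in one or two sentences each, and then close with the two-inequality sandwich argument above.
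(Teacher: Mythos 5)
Your proof is correct: the complementation duality between vertex covers and independent sets, followed by the two-inequality sandwich on extremal sets, is the standard and complete argument for Gallai's identity. The paper itself gives no proof --- it cites the result directly from Gallai (1959) --- so there is no in-paper argument to compare against; your write-up would serve as a valid self-contained justification.
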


By Theorems \ref{MainTheorem} and \ref{th gallai} we deduce that $$\gamma_{\rm sp}(G \circ H) \le  \alpha(G) \gamma_{\rm sp}(H)+\tau(G)n'.$$

\begin{theorem}\label{TrivialLowerBound} Let $G$ be a  graph of order $n\ge 2$ and let $H$ be a nonempty graph of order $n'$. Then the following assertions hold.
\begin{itemize}
\item $\gamma_{\rm sp}(G \circ H)\geq n\gamma_{\rm sp}(H).$
\item $\gamma_{\rm sp}(G \circ H)= n\gamma_{\rm sp}(H)$ if and only if $G\cong K_2$,  $\gamma_{\rm sp}(H)=n'-1$ and $H\not\cong K_{n'}$.
\end{itemize}
\end{theorem}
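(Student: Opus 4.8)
The first assertion will follow immediately from Lemma~\ref{important}. If $W$ is a $\gamma_{\rm sp}(G\circ H)$-set, the sets $\{g\}\times V(H)$ with $g\in V(G)$ partition $V(G\circ H)$, so $|W|=\sum_{g\in V(G)}|W_g|$; since $|W_g|\ge\gamma_{\rm sp}(H)$ for every $g$ by Lemma~\ref{important}, we obtain $\gamma_{\rm sp}(G\circ H)=|W|\ge n\,\gamma_{\rm sp}(H)$.

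For the ``if'' part of the second assertion, suppose $G\cong K_2$, $\gamma_{\rm sp}(H)=n'-1$ and $H\not\cong K_{n'}$. The lower bound just proved gives $\gamma_{\rm sp}(K_2\circ H)\ge 2(n'-1)$, and for the reverse inequality I would invoke the second part of Theorem~\ref{MainTheorem}: since $\alpha_2(K_2)=2$ (the two vertices of $K_2$ induce a graph of maximum degree $1$), it yields $\gamma_{\rm sp}(K_2\circ H)\le 2n'-\alpha_2(K_2)=2(n'-1)$. Hence equality holds.

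For the ``only if'' part, assume $\gamma_{\rm sp}(G\circ H)=n\,\gamma_{\rm sp}(H)$ and fix a $\gamma_{\rm sp}(G\circ H)$-set $W$. From $|W|=\sum_{g}|W_g|$ together with $|W_g|\ge\gamma_{\rm sp}(H)$ (Lemma~\ref{important}) we must have $|W_g|=\gamma_{\rm sp}(H)$ for every $g\in V(G)$. As $H$ is nonempty, Theorem~\ref{theorem1} gives $\gamma_{\rm sp}(H)\le n'-1$, so $\overline{W}_g\neq\emptyset$ for all $g$. Since $G$ is connected and $n\ge 2$, no vertex of $G$ is isolated, so applying Lemma~\ref{adjacentcopies} to an edge incident with a prescribed vertex forces $|\overline{W}_g|=1$ for every $g\in V(G)$; consequently $\gamma_{\rm sp}(H)=n'-|\overline{W}_g|=n'-1$. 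Write $\overline{W}_g=\{y_g\}$, so that $W_g=V(H)\setminus\{y_g\}$.

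The remaining step, which I expect to be the main obstacle, is to extract $G\cong K_2$ and $H\not\cong K_{n'}$ from the structure of external private neighbours. For each $g$, the vertex $(g,y_g)\in\overline{W}$ must have an external private neighbour $(g',z)\in W$, i.e.\ $N(g',z)\cap\overline{W}=\{(g,y_g)\}$. I would expand this intersection: it equals $\{(h,y_h):h\in N_G(g')\}$, together with the extra vertex $(g',y_{g'})$ precisely when $zy_{g'}\in E(H)$ (note $z\neq y_{g'}$ since $z\in W_{g'}$, and $g'\notin N_G(g')$). If $g=g'$ this forces $N_G(g')=\emptyset$, impossible; hence $g\neq g'$, and then necessarily $N_G(g')=\{g\}$ and $zy_{g'}\notin E(H)$. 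Thus every vertex of $G$ has a pendant neighbour. Applying this to a pendant vertex $g'$ with $N_G(g')=\{g\}$ produces a pendant neighbour of $g'$, which can only be $g$, so $N_G(g)=\{g'\}$ as well; therefore $\{g,g'\}$ spans a connected component isomorphic to $K_2$, and connectivity of $G$ gives $G\cong K_2$. Finally, were $H\cong K_{n'}$, then $zy_{g'}\in E(H)$ since $z\neq y_{g'}$, contradicting $zy_{g'}\notin E(H)$; hence $H\not\cong K_{n'}$, which completes the characterization.
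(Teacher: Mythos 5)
Your proof is correct, and the converse direction takes a genuinely different route from the paper's. The lower bound is identical (both read it off Lemma \ref{important}), and for the ``if'' direction your appeal to the second part of Theorem \ref{MainTheorem} with $\alpha_2(K_2)=2$ is a legitimate shortcut: the paper instead exhibits the set $V(G)\times V(H)\setminus\{(a,h),(b,h)\}$ explicitly, which is exactly the set that theorem's proof would produce. The real divergence is in the ``only if'' part. The paper proves $G\cong K_2$ first, by taking a vertex $x$ of degree at least two, following a private neighbour $(u,v)$ of some $(x,y)\in\overline{W}$, and deriving contradictions in two subcases according to the degree of $u$; it then gets $H\not\cong K_{n'}$ from $K_2\circ K_{n'}\cong K_{2n'}$, and finally rules out $\gamma_{\rm sp}(H)\le n'-2$ by a separate three-vertex argument. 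You instead pin down $|\overline{W}_g|=1$ for every $g$ at the outset via Lemma \ref{adjacentcopies} (which yields $\gamma_{\rm sp}(H)=n'-1$ immediately), and then extract both $N_G(g')=\{g\}$ and $zy_{g'}\notin E(H)$ from a single explicit computation of $N(g',z)\cap\overline{W}=\bigl(\{(h,y_h):h\in N_G(g')\}\bigr)\cup\bigl(\{(g',y_{g'})\}\cap N(g',z)\bigr)$; iterating the pendant-neighbour conclusion forces $G\cong K_2$, and the non-edge $zy_{g'}$ forces $H\not\cong K_{n'}$. Your version is more unified and mechanical, at the cost of requiring the explicit description of $\overline{W}$ as $\{(g,y_g):g\in V(G)\}$ before any structural deduction; the paper's version gets $G\cong K_2$ without first knowing $|\overline{W}_g|=1$ everywhere. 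One remark: both arguments rely on the standing assumption, made just before Subsection 3.1, that $G$ is connected (the equality characterization fails for, say, $G=N_2$), so your explicit use of connectivity is consistent with the paper rather than an extra hypothesis.
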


\begin{proof}
The lower bound is a direct consequence of Lemma \ref{important}. Assume that $G\cong K_2$,  $\gamma_{\rm sp}(H)=n'-1$ and $H\not\cong K_{n'}$. By the lower bound we have 
$\gamma_{\rm sp}(G \circ H)\geq 2(n'-1).$ Let $h$ be a nonuniversal vertex of $H$ and   $V(G)=\{a,b\}$. To show that $\gamma_{\rm sp}(G \circ H)\leq 2(n'-1)$ we only need to observe that $V(G)\times V(H)\setminus \{(a,h),(b,h)\}$ is a super dominating set for $G\circ H$, \textit{i.e.},  if $h'$ is not adjacent to $h$ in $H$, then $(a,h')\in N(b,h) \setminus N(a,h)$ and $(b,h')\in N(a,h)\setminus N(b,h)$.

From now on we assume that $\gamma_{\rm sp}(G \circ H)= n\gamma_{\rm sp}(H)$.  Let $W$ be a $\gamma_{\rm sp}(G \circ H)$-set. Since $\gamma_{\rm sp}(G \circ H)= n\gamma_{\rm sp}(H)$, from Lemma \ref{important} we deduce that for any $g\in V(G)$, $|W_g|=\gamma_{\rm sp}(H)\le n'-1$. Suppose that $G\not \cong K_2$. Let $x\in V(G)$ be a vertex of degree at least two and let  $(u,v)\in W$ be a private neighbour of $(x,y)\not \in W$ with respect to  $\overline{W}$, \textit{i.e.} $N(u,v)\cap \overline{W}=\{(x,y)\}$. Let $x'\in V(G)\setminus \{u\}$ be a neighbour of $x$ and $(x',y')\not \in W$. If $u=x$, then $(x',y')\in N(u,v)$, and  $\{(x,y),(x',y')\}\subseteq N(u,v)\cap \overline{W}$, which is a contradiction, so that $u\ne x$.  Thus, if $u$ has degree one, then for $(u,z)\not\in W$   we have $N(u,z)\subset N(x,y)\cup N(x',y')$ , which is a contradiction. Otherwise there exists $u'\in N(u)\setminus \{x\}$ and for  
$(u',z')\not\in W$   we have $(u',z')\in N(u,v)$. Thus, $\{(x,y),(u',z')\}\subseteq N(u,v)\cap \overline{W}$, which is a contradiction again. Hence, we can conclude that $G\cong K_2$. 

Notice that $H\not \cong K_{n'}$, as $K_2\circ K_{n'}\cong K_{2n'}$ and $\gamma_{\rm sp}(K_{2n'})=2n'-1>2(n'-1)=2\gamma_{\rm sp}(K_{n'})$.

To conclude the proof suppose that $\gamma_{\rm sp}(H)\le n'-2$. In this case, since $\gamma_{\rm sp}(G \circ H)= n\gamma_{\rm sp}(H)$, from Lemma \ref{important} we deduce that for any $g\in V(G)$, $|W_g|=\gamma_{\rm sp}(H)\le n'-2$. Let $V(K_2)=\{a,a'\}$. Hence, for any $(a,b_1),(a,b_2),(a',b_3)\not \in W$ we have that $N(a,b_1)\subseteq N(a,b_2) \cup N(a',b_3)$, which is a contradiction. Therefore,  $\gamma_{\rm sp}(H)=n'-1$ and so the result follows.
\end{proof}

The following result provides an upper bound on the super domination number of the graph $G \circ H$ in terms of the order and the super domination number of its factors.
 
\begin{theorem}\label{BoundMin}
For any graph $G$ of order $n\ge 2$ and any  graph $H$ of order $n'\ge 2$,
$$\gamma_{\rm sp}(G \circ H)\leq \min\{ n(n'-1)+\gamma_{\rm sp}(G), n'(n-1)+\gamma_{\rm sp}(H)\}.$$
\end{theorem}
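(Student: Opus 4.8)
The plan is to prove each of the two bounds inside the minimum separately, by constructing an explicit super dominating set of the required size; since the statement is symmetric in the roles of the two factors up to the obvious asymmetry of $G\circ H$ (the ``outer'' factor $G$ governs adjacency between copies), I will carry out both constructions rather than appeal to symmetry.

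\medskip
\textbf{Bound $\gamma_{\rm sp}(G\circ H)\le n(n'-1)+\gamma_{\rm sp}(G)$.} First I would fix a $\gamma_{\rm sp}(G)$-set $D$ and a vertex $h_0\in V(H)$. The idea is to remove from each copy $\{g\}\times V(H)$ exactly one vertex, namely $(g,h_0)$, but only for those $g$ lying outside $D$; inside the copies corresponding to $D$ we keep everything. Concretely, set
$$S=\Bigl(\bigcup_{g\in D}\{g\}\times V(H)\Bigr)\cup\Bigl(\bigcup_{g\notin D}\{g\}\times\bigl(V(H)\setminus\{h_0\}\bigr)\Bigr),$$
so that $|S|=|D|\,n'+(n-|D|)(n'-1)=n(n'-1)+\gamma_{\rm sp}(G)$. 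To check that $S$ is super dominating, take $(x,y)\notin S$; then $x\notin D$ and $y=h_0$. Since $D$ is super dominating in $G$, there is $x'\in D$ with $N(x')\cap\overline{D}=\{x\}$. I would then verify that the vertex $(x',h_0)\in S$ has $N(x',h_0)\cap\overline{S}=\{(x,h_0)\}$: indeed the neighbours of $(x',h_0)$ outside $S$ can only be of the form $(g,h_0)$ with $g\notin D$ and $g$ adjacent to $x'$ in $G$ (or $g=x'$, impossible since $x'\in D$, or $g=x'$ with $h_0h_0$ nonedge — also impossible), and $N(x')\cap\overline{D}=\{x\}$ forces $g=x$. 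The one point requiring a line of care is the within-copy part: we need that $(x',h_0)$ has no neighbour inside $\{x'\}\times(V(H)\setminus\{h_0\})$ lying outside $S$ — but that whole set is inside $S$ since $x'\in D$, so there is nothing to worry about.

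\medskip
\textbf{Bound $\gamma_{\rm sp}(G\circ H)\le n'(n-1)+\gamma_{\rm sp}(H)$.} Here I would fix a $\gamma_{\rm sp}(H)$-set $D'$ and a single vertex $g_0\in V(G)$, and delete the ``missing'' part of $H$ only in the copy over $g_0$:
$$S=\bigl(\{g_0\}\times D'\bigr)\cup\Bigl(\bigcup_{g\ne g_0}\{g\}\times V(H)\Bigr),$$
with $|S|=\gamma_{\rm sp}(H)+(n-1)n'=n'(n-1)+\gamma_{\rm sp}(H)$. For $(x,y)\notin S$ we must have $x=g_0$ and $y\in\overline{D'}$, so there is $y'\in D'$ with $N_H(y')\cap\overline{D'}=\{y\}$; the candidate external private neighbour is $(g_0,y')\in S$. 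Its neighbours outside $S$ all lie in the copy $\{g_0\}\times V(H)$ (vertices in other copies over neighbours of $g_0$ are all in $S$, since we kept those copies whole, and $G$ need not even be connected for this argument — here I would use $n\ge 2$ only implicitly to have the formula make sense, though actually the bound holds trivially otherwise), and among those the ones outside $S$ are exactly $\{g_0\}\times\overline{D'}$; intersecting with $N(g_0,y')$ gives $\{g_0\}\times(N_H(y')\cap\overline{D'})=\{(g_0,y)\}$. So $S$ is super dominating.

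\medskip
Taking the minimum of the two cardinalities gives the claim. I do not anticipate a genuine obstacle: both constructions are direct analogues of the construction used in Theorem~\ref{MainTheorem}, and the only subtlety is bookkeeping of which neighbours of the chosen external private neighbour fall outside $S$ — in each case the ``dangerous'' directions (within the kept copies, and within the single partially-kept copy) are handled by the defining property of the $\gamma_{\rm sp}$-set of the relevant factor. If I wanted to be economical in the write-up I would prove the second bound in full detail and then note that the first follows by the same method with the roles of ``inner deletion'' and ``outer deletion'' swapped.
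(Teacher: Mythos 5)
Your proposal is correct and matches the paper's proof essentially verbatim: both bounds are obtained from exactly the same two constructions (delete one fixed vertex of $H$ from each copy over $\overline{D}$ for a $\gamma_{\rm sp}(G)$-set $D$; respectively, keep all copies whole except one, where a $\gamma_{\rm sp}(H)$-set is used), with the same verification via the private-neighbour property of the relevant factor.
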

\begin{proof}Let $S$ be a $\gamma_{\rm sp}(G)$-set and let $y\in V(H)$. We claim that 
$$W=\left( \bigcup_{x\in S}\{x\}\times V(H)\right)\cup \left(\bigcup_{x\not \in S}\{x\}\times (V(H)\setminus \{y\}) \right) $$
 is a super dominating set for $G\circ H$. To see this we only need to observe that for any $(x,y) \in \overline{W}$, there exists $x'\in S$ such that $N(x')\cap \overline{S}=\{x\}$, which implies that
 $N(x',y')\cap \overline{W}=\{(x,y)\}$  for every $y'\in V(H)$.
 Hence, $\gamma_{\rm sp}(G \circ H)\leq |W|= n(n'-1)+\gamma_{\rm sp}(G)$.
 
 Now, let $S'$ be a $\gamma_{\rm sp}(H)$-set and let $x'\in V(G)$. We claim that 
$$W'=\left( \bigcup_{x\ne x'}\{x\}\times V(H)\right)\cup \left(\{x'\}\times S'   \right) $$
 is a super dominating set for $G\circ H$. In this case we only need to observe that $\{x'\}\times S'$ is a super dominating set for  
the subgraph induced by  $\{x'\}\times V(H)$.
 Hence, $\gamma_{\rm sp}(G \circ H)\leq |W'|= n'(n-1)+\gamma_{\rm sp}(H)$.
\end{proof}

The bound above is tight.
For instance, as we will show in Proposition \ref{G-Complete}, the equality $\gamma_{\rm sp}(G \circ H)=n'(n-1)+\gamma_{\rm sp}(H)$ holds  for any graph $H$ with $\gamma_{\rm sp}(H)\le n'-2$ and $G$ isomorphic to a complete graph. On the other hand, it is not difficult to check that if $G\cong K_r \odot K_1$ is the corona product  $K_r$ times $K_1$, then $\gamma_{\rm sp}(G)=\frac{n}{2}=r$ and $\gamma_{\rm sp}(G \circ K_{n'})=2rn'-r=n(n'-1)+\gamma_{\rm sp}(G)$.

In the next result we obtain an upper bound for $\gamma_{\rm sp}(G \circ N_{n'})$, where $N_{n'}$ is the empty graph of order $n'$. 
To state the result, we first need some additional notation and terminology. 
A set $S$ of vertices is called a  2-{\it packing} if for every pair of vertices $u,v\in S$, $N [u]\cap N [v]=\emptyset$. The 2-packing number  $\rho(G)$ of a graph $G$ is the  cardinality of a maximum 2-packing in $G$. A 2-packing  of cardinality $\rho(G)$ is called a $\rho(G)$-set.
Given a graph $G$, its {\it line graph} $L(G)$ is a graph such that  each vertex of $L(G)$ represents an edge of $G$ and
    two vertices of $L(G)$ are adjacent if and only if their corresponding edges are adjacent in $G$.

\begin{theorem}\label{LowerBoundEmptyGraphs}
For any  graph $G$ of order $n$ and any integer $n'\ge 2$,
$$\gamma_{\rm sp}(G\circ N_{n'})\le nn'-2\rho(L(G)).$$
\end{theorem}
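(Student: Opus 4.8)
The goal is to bound $\gamma_{\rm sp}(G\circ N_{n'})$ from above by $nn'-2\rho(L(G))$, i.e. to exhibit a super dominating set whose complement has size exactly $2\rho(L(G))$. A 2-packing of $L(G)$ of size $\rho(L(G))$ is a collection of edges $e_1=x_1z_1,\dots,e_\rho=x_\rho z_\rho$ of $G$ that are pairwise "at distance $\geq 2$ in the line graph", meaning no two of them share an endpoint and no two of them are joined by an edge of $G$. So first I would translate the 2-packing condition on $L(G)$ into this statement about edges of $G$, and pick such a family $\{e_i\}_{i=1}^{\rho}$.

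**Constructing the super dominating set.**
The natural candidate is
$$
W = \bigl(V(G)\times V(N_{n'})\bigr)\setminus \bigl\{(x_i,v_1),(z_i,v_1) : i=1,\dots,\rho\bigr\},
$$
where $v_1$ is a fixed (arbitrary) vertex of $N_{n'}$; thus $\overline{W}$ consists of exactly $2\rho$ vertices, two in each of the $\rho$ "columns'' indexed by the endpoints of the chosen edges. Now I would verify the super domination condition: take $(x_i,v_1)\in\overline{W}$; I claim $(z_i,v_1)$ is its external private neighbour. Since $x_iz_i\in E(G)$, $(x_i,v_1)$ and $(z_i,v_1)$ are adjacent in $G\circ N_{n'}$. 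The neighbourhood of $(z_i,v_1)$ in $G\circ N_{n'}$ is exactly $\{(u,v): u\in N_G(z_i), v\in V(N_{n'})\}$ (recall $N_{n'}$ is empty, so there is no "within-column'' contribution). I must check $N(z_i,v_1)\cap\overline{W}=\{(x_i,v_1)\}$. Any element of $\overline{W}$ other than $(x_i,v_1)$ lies in some column $x_j$ or $z_j$; if that column equals $z_i$ itself it is excluded because $z_i\notin N_G(z_i)$, and if it is a different endpoint column, the 2-packing property (no shared endpoints, no $G$-edge between distinct chosen edges) forces that column-vertex not to be adjacent to $z_i$ in $G$ — except we must be slightly careful that $x_i$ and $z_i$ don't coincide with an endpoint of another $e_j$, which is exactly ruled out. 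Hence $(z_i,v_1)$ is a valid external private neighbour, and symmetrically $(x_i,v_1)$ serves for $(z_i,v_1)$. Therefore $W$ is a super dominating set and $\gamma_{\rm sp}(G\circ N_{n'})\leq |W| = nn'-2\rho(L(G))$.

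**Main obstacle and loose ends.**
The delicate point — and the one I would write out carefully — is the case analysis showing $N(z_i,v_1)\cap\overline{W}=\{(x_i,v_1)\}$: one has to use \emph{both} parts of the 2-packing condition (edges $e_i,e_j$ share no endpoint, and are non-adjacent in $G$) to rule out that a vertex $x_j$ or $z_j$ with $j\neq i$, or the vertex $z_i$ in the same chosen edge when it happens to be reached "from the other end", lies in $N_G(z_i)$. Concretely, $N_G[e_i]\cap N_G[e_j]=\emptyset$ in the sense of line-graph distance $\geq 2$ means precisely $\{x_i,z_i\}\cap\{x_j,z_j\}=\emptyset$ and no edge of $G$ joins $\{x_i,z_i\}$ to $\{x_j,z_j\}$; this is exactly what is needed. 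I would also note the boundary behaviour: if $\rho(L(G))=0$ (e.g. $G$ edgeless) the bound degenerates to $\gamma_{\rm sp}(G\circ N_{n'})\leq nn'$, consistent with Theorem~\ref{theorem1} since $G\circ N_{n'}$ is then empty. No appeal to earlier lemmas is really required beyond the definitions, though Lemma~\ref{adjacentcopies} is morally the reason one cannot delete more than two vertices per adjacent pair of columns, which is why the $2\rho$ bound is the right target.
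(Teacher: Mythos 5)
There is a genuine (though localized) flaw in your verification step. By the definition used in this paper, a super dominating set $D$ requires that for each $u\in\overline{D}$ there is a witness $v\in D$ with $N(v)\cap\overline{D}=\{u\}$; the witness must belong to $D$. Your proposed witness for $(x_i,v_1)$ is $(z_i,v_1)$, but $(z_i,v_1)$ is itself one of the deleted vertices, i.e.\ $(z_i,v_1)\in\overline{W}$, so it cannot serve as the external private neighbour no matter what its neighbourhood looks like. The same objection applies to the ``symmetric'' claim that $(x_i,v_1)$ serves for $(z_i,v_1)$. A telltale sign of the problem is that your argument never uses the hypothesis $n'\ge 2$, yet the statement is false for $n'=1$ (take $G\cong K_2$: then $\rho(L(G))=1$ and the bound would give $\gamma_{\rm sp}(K_2)\le 0$).

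The fix is exactly the point of the paper's proof: choose the witness in a different ``row'' of the $z_i$-column, namely $(z_i,v)$ for some $v\in V(N_{n'})\setminus\{v_1\}$, which exists precisely because $n'\ge 2$ and which does lie in $W$. Since $N_{n'}$ is empty, $N(z_i,v)=N_G(z_i)\times V(N_{n'})$ is independent of $v$, so your case analysis --- $z_i\notin N_G(z_i)$, and the $2$-packing condition on $L(G)$ rules out any edge of $G$ from $z_i$ to an endpoint of another chosen edge --- carries over verbatim and yields $N(z_i,v)\cap\overline{W}=\{(x_i,v_1)\}$. With that one-line correction your construction coincides with the paper's ($W$ is the same set, and the $2$-packing translation is handled identically), and the bound $\gamma_{\rm sp}(G\circ N_{n'})\le nn'-2\rho(L(G))$ follows.
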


\begin{proof}
Let $S$ be a $\rho(L(G))$-set and $V_S=\{x\in V(G):\, x\in e\text{ for some }e\in S\}$. Given a fix vertex $y'\in V(H)$ we define 
$$W=\left(\bigcup_{x\in V_S}\{x\}\times (V(N_{n'})\setminus \{y'\}) \right) \cup \left(\bigcup_{x\not \in V_S}\{x\}\times V(N_{n'})  \right).$$ Notice that if $(x,y')\not\in W$, then $x\in V_S$, which implies that there exists $x'\in N(x)$ such that $\{x,x'\}\in S$. Now, since $S$ is  a 2-packing  of $L(G)$, for any $y\in V(N_{n'})\setminus \{y'\}$ we have that $(x',y)$ is a private neighbour of $(x,y')$ with respect to $\overline{W}$,  \textit{i.e.}$N(x',y)\cap \overline{W}=\{(x,y')\}$. Hence, $W$ is a super dominating set of $G\circ H$ and so $\gamma_{\rm sp}(G\circ N_{n'})\le |W|=nn'-|V_S|=nn'-2\rho(L(G))$.
Therefore, the result follows.
\end{proof}

The bound above is tight. It is achieved, for instance, for 
$G\cong K_n$. Notice that $\rho(L(K_n))=1$, $K_n\circ N_{n'}\cong K_{n',\dots,n'}$ and $\gamma_{\rm sp}(K_n\circ N_{n'})=nn'-2=nn'-2\rho(L(K_n))$.


\subsection{Closed formulas and complexity}\label{SubsectionClosedFormulas}

In this subsection we obtain closed formulas for the super domination number of lexicographic product graphs and, as a consequence of the study, we show that the problem of computing the super domination number of a graph is NP-Hard.

\begin{theorem}\label{TheoremEquality} Let $G$   be a graph  of order~$n$  and maximum degree $\Delta(G)$. If  $H$  is a graph of order $n'$ such that $n'-\gamma_{\rm sp}(H)>\Delta(G)+1$, then
 $$\gamma_{\rm sp}(G \circ H) = \alpha(G) \gamma_{\rm sp}(H)+(n-\alpha(G))n'.$$
    \end{theorem}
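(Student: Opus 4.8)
The upper bound is already supplied by Theorem~\ref{MainTheorem}, so the whole task is to prove the matching lower bound
$$\gamma_{\rm sp}(G \circ H) \ge \alpha(G) \gamma_{\rm sp}(H)+(n-\alpha(G))n'$$
under the hypothesis $n'-\gamma_{\rm sp}(H) > \Delta(G)+1$. The plan is to take a $\gamma_{\rm sp}(G\circ H)$-set $W$ and analyze the ``deficiency'' $|\overline{W}_g| = n' - |W_g|$ copy by copy. By Lemma~\ref{important} we have $|W_g|\ge \gamma_{\rm sp}(H)$ for every $g\in V(G)$, i.e.\ $|\overline{W}_g| \le n' - \gamma_{\rm sp}(H)$. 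Let $A = \{g\in V(G): \overline{W}_g \ne \emptyset\}$ be the set of copies that are not entirely inside $W$. On every copy $g\notin A$ we have $|W_g| = n'$; on every copy $g\in A$ we have $|W_g|\ge \gamma_{\rm sp}(H)$. So
$$|W| = \sum_{g\in V(G)} |W_g| \ge |A|\,\gamma_{\rm sp}(H) + (n-|A|)\,n' = nn' - |A|\,(n'-\gamma_{\rm sp}(H)).$$
Comparing with the target bound $nn' - \alpha(G)(n'-\gamma_{\rm sp}(H))$, it suffices to show that $A$ is an \emph{independent set} in $G$, hence $|A|\le\alpha(G)$, and then the inequality closes since $n'-\gamma_{\rm sp}(H)>0$.

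\textbf{Showing $A$ is independent.} Suppose for contradiction that $g,g'\in A$ with $gg'\in E(G)$. By Lemma~\ref{adjacentcopies}, if $\overline{W}_g\ne\emptyset$ and $\overline{W}_{g'}\ne\emptyset$ and $gg'\in E(G)$, then $|\overline{W}_g| = |\overline{W}_{g'}| = 1$. Now pick the unique vertex $(g,h)\in\overline{W}$. Since $W$ is a super dominating set, $(g,h)$ has an external private neighbour $(u,v)\in W$, i.e.\ $N(u,v)\cap \overline{W} = \{(g,h)\}$. The key observation is that $(u,v)$ cannot lie in a copy $u$ which is \emph{adjacent to $g$ in $G$ or equal to $g$}: indeed, the neighbourhood $N(u,v)$ then contains all of $\{u\}\times \overline{W}_u$ if $u=g$ is handled via $H$-adjacency, but more to the point, $N(u,v)$ contains $\{w\}\times V(H)$ for every $w\in N_G(u)$, and it contains $\{u\}\times V(H)\setminus\{(u,v)\}$ worth of candidates when we also use $H$-edges — the cleanest route is: every vertex of $\overline{W}$ in a copy adjacent to $u$ (in $G$) lies in $N(u,v)$. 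So the condition $N(u,v)\cap\overline{W}=\{(g,h)\}$ forces: (i) for every $w\in N_G(u)$ with $w\ne g$, the copy $w$ is entirely inside $W$, i.e.\ $w\notin A$; and (ii) if $u = g$, then $g$ has no $G$-neighbour in $A$ other than possibly $g'$, contradicting nothing yet — so I will instead argue $u\neq g$ using that $\overline{W}_g=\{h\}$ has size $1$ and that $(g,h)$'s private neighbour in copy $g$ would need $N_H(v)\cap\overline{W}_g=\{h\}$, which is possible; hence I must rule out $u=g$ more carefully, or absorb that case. The genuinely clean argument: the private neighbour $(u,v)$ must satisfy that \emph{every} $G$-neighbour of $u$ lies outside $A$ except those whose copy contributes only $(g,h)$ to $\overline{W}$, so $N_G(u)\setminus\{g\}$ contains no vertex of $A$; combined with $u\in N_G[g]$ (since $(u,v)$ must be adjacent to $(g,h)$), we get that $A\subseteq N_G[g]\cap N_G[u]$ up to the single shared vertex. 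Then $|A| \le 2$... and this is where the degree hypothesis enters.

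\textbf{Where the hypothesis $n'-\gamma_{\rm sp}(H)>\Delta(G)+1$ is used, and the main obstacle.} The real point is a counting argument on a single copy $g\in A$ that has a $G$-neighbour: the private neighbour $(u,v)$ of $(g,h)$ has, inside copy $u$, potentially many $\overline{W}$-free neighbours via $H$-edges and via $G$-edges to other copies; the constraint $N(u,v)\cap\overline{W}=\{(g,h)\}$ forces all copies $G$-adjacent to $u$ (other than the one containing $(g,h)$, if $u\sim g$) to lie outside $A$, and forces $\overline{W}_u\subseteq N_H[v]$-complement considerations. Aggregating over \emph{all} vertices of $\overline{W}$: each $(g,h)\in\overline{W}$ consumes at least one private neighbour in $W$, these private neighbours are distinct, and each lives in a copy with tightly constrained structure. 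The total deficiency $\sum_{g\in A}|\overline{W}_g|$ must be ``paid for'' by private neighbours, and a copy $g\in A$ with $|\overline{W}_g|$ large (close to $n'-\gamma_{\rm sp}(H)$, which exceeds $\Delta(G)+1$) simply cannot have any $G$-neighbour also in $A$, because that would force $|\overline{W}_g|=1$ by Lemma~\ref{adjacentcopies} — contradiction with $|\overline{W}_g|$ being large. So in fact the hypothesis $n'-\gamma_{\rm sp}(H)>\Delta(G)+1 \ge 2$ already gives, via Lemma~\ref{important} applied with care plus Lemma~\ref{adjacentcopies}: \emph{no two vertices of $A$ are $G$-adjacent} — because if they were, both deficiencies equal $1$, whereas the minimality of $W$ should let us do better. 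Concretely, I expect the clean statement to be: if $gg'\in E(G)$ with $g,g'\in A$, then since $|\overline{W}_g|=|\overline{W}_{g'}|=1$ we can \emph{add} the two missing vertices of copy $g$ and $g'$... no, we want to \emph{remove} vertices to contradict minimality. Let me instead aim: swap so that copy $g$ becomes deficiency-free by moving $(g,h)$ into $W$ and removing $n'-\gamma_{\rm sp}(H) > \Delta(G)+1$ vertices from some copy $g^*$ that is ``protected'', producing a smaller super dominating set — the degree bound guarantees copy $g^*$ exists because the private-neighbour structure near a $G$-edge inside $A$ only ``protects'' at most $\Delta(G)+1$ copies, fewer than the deficiency we could otherwise exploit. \textbf{This swapping/exchange step is the main obstacle} and will require carefully tracking which vertices are private neighbours of which, so that after the modification every $\overline{W}$-vertex still has an external private neighbour; the degree hypothesis is exactly what makes the accounting balance. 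Once $A$ is shown independent, the displayed inequality chain above completes the proof together with the upper bound from Theorem~\ref{MainTheorem}.
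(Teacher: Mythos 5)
Your overall framework is exactly the paper's: upper bound from Theorem~\ref{MainTheorem}; lower bound by taking a $\gamma_{\rm sp}(G\circ H)$-set $W$, letting $A=\{g\in V(G):\overline{W}_g\ne\emptyset\}$, using Lemma~\ref{important} to get $|W|\ge nn'-|A|\,(n'-\gamma_{\rm sp}(H))$, and reducing everything to the claim that $A$ is independent in $G$. You also correctly invoke Lemma~\ref{adjacentcopies} to conclude that adjacent $g,g'\in A$ would both have deficiency exactly one. But the proof of independence is never actually completed: you run through several private-neighbour analyses, abandon each, and explicitly label the exchange step ``the main obstacle.'' The one exchange you do sketch --- move $(g,h)$ into $W$ and delete $n'-\gamma_{\rm sp}(H)$ vertices from some remote ``protected'' copy $g^*$ --- is not justified and is not the right move (there is no reason such a $g^*$ exists, nor that deleting vertices there preserves super domination). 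This is a genuine gap, and it sits precisely where the hypothesis $n'-\gamma_{\rm sp}(H)>\Delta(G)+1$ has to be spent.

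The exchange that closes the gap is local to the copy $g$ itself. Fix a $\gamma_{\rm sp}(H)$-set $S$ and set $W'=\bigl(W\setminus(\{g\}\times V(H))\bigr)\cup(\{g\}\times S)\cup\bigl(\bigcup_{u\in N(g)}\{u\}\times V(H)\bigr)$, i.e.\ shrink copy $g$ down to $S$ and completely fill every copy adjacent to $g$. Each vertex of $\{g\}\times\overline{S}$ then finds its private neighbour inside copy $g$, because $S$ is a super dominating set of $H$ and all adjacent copies are now full; every other vertex of $\overline{W'}$ lies in $\overline{W}$ with its copy outside $N[g]$, and its old private neighbour $(u,v)$ still works since one checks $u\notin N[g]$ (if $u\in N(g)$ then $N(u,v)$ already meets $\{g\}\times\overline{W}_g\ne\emptyset$ in a second point of $\overline{W}$, and $u=g$ is impossible as $N(g,v)$ stays inside the copies of $N[g]$). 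The cost of filling the adjacent copies is $|A\cap N(g)|\le\Delta(G)$ vertices (one per deficient neighbour, by Lemma~\ref{adjacentcopies}), while shrinking copy $g$ saves $n'-1-\gamma_{\rm sp}(H)>\Delta(G)$ vertices, so $|W'|<|W|$, contradicting minimality. With that lemma-style exchange in place, your counting argument finishes the proof as you intended.
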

    
   \begin{proof}
    By Theorem \ref{MainTheorem} we have that $\gamma_{\rm sp}(G \circ H) \le \alpha(G) \gamma_{\rm sp}(H)+(n-\alpha(G))n'$, so that it remains to show that $\gamma_{\rm sp}(G \circ H) \ge \alpha(G) \gamma_{\rm sp}(H)+(n-\alpha(G))n'.$
    
    Let $W$ be a $\gamma_{\rm sp}(G \circ H) $-set and set 
    $$X=\{x\in V(G):\, |W_x|<n'\}.$$
    We claim that $X$ is an independent set. To see this, suppose that  there are two adjacent vertices $x,x'$ which belong to $X$.  Notice that Lemma \ref{adjacentcopies}  leads to $|W_x|=|W_{x'}|=n'-1$. Hence, for any $\gamma_{\rm sp}(H)$-set $S$ we have that
    $$W'=\left( W\setminus \left( \{x\}\times V(H)\right)\right)\cup \left(\{x\}\times S\right) \cup \left(\bigcup_{u\in N(x)}\{u\}\times V(H) \right) $$
    is a super dominating set for $G\circ H$ and 
  \begin{align*}
  |W'|&=|W|+|X\cap N(x)|-(n'-1-\gamma_{\rm sp}(H))\\
  &\le |W|+|\Delta(G)|-(n'-1-\gamma_{\rm sp}(H))\\
  &<|W|,
  \end{align*}  
    which is a contradiction. Thus,
    $X$ is an independent set and, by Lemma \ref{important} we have that
\begin{align*}
   \gamma_{\rm sp}(G \circ H)&=\sum_{u \in X}|W_u|+\sum_{u\not \in X}|W_u|\\
   &\ge |X|\gamma_{\rm sp}(H)+(n-|X|)n'\\
   &=nn'-|X|(n'-\gamma_{\rm sp}(H))\\
      &\ge nn'-\alpha(G)(n'-\gamma_{\rm sp}(H))\\
   &= \alpha(G)\gamma_{\rm sp}(H)+(n-\alpha(G))n',
\end{align*}   
as required.
   \end{proof}

 Fernau and Rodr\'{i}guez-Vel\'{a}zquez \cite{RV-F-2013,MR3218546}  showed that the study of corona product graphs enables us to infer NP-hardness results for computing the (local) metric dimension, based on according NP-hardness results for the (local) adjacency dimension. Our next result shows how the  study of lexicographic product graphs enables us to infer an NP-hardness result for computing the super domination number, based on a well known NP-hardness result for the independence number, \textit{i.e.}, since the problem of computing the independence number of a graph is {\rm NP}-Hard \cite{Garey1979},   Theorem \ref{TheoremEquality} leads to the following result.

   \begin{corollary}
   The problem of finding the super domination number of a graph is {\rm NP}-Hard.
   \end{corollary}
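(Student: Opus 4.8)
The plan is to reduce the problem of computing the independence number --- which is NP-Hard \cite{Garey1979} --- to the problem of computing the super domination number, exploiting the closed formula of Theorem \ref{TheoremEquality}.

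I would proceed as follows. Given an instance $G$ of order $n$ and maximum degree $\Delta(G)$, assume $\Delta(G)\ge 1$ (otherwise $G$ is edgeless and $\alpha(G)=n$ is read off immediately), set $m=\Delta(G)+2$, and take $H=mK_2$, the disjoint union of $m$ copies of $K_2$, so that $n':=|V(H)|=2m$. The first step is to record that $\gamma_{\rm sp}(mK_2)=m$: the lower bound $\gamma_{\rm sp}(H)\ge\lceil n'/2\rceil=m$ is Theorem \ref{theorem1}, and selecting one endpoint of each copy of $K_2$ produces a super dominating set of size $m$. Consequently $n'-\gamma_{\rm sp}(H)=2m-m=m=\Delta(G)+2>\Delta(G)+1$, so the hypothesis of Theorem \ref{TheoremEquality} is satisfied and it yields
$$\gamma_{\rm sp}(G\circ H)=\alpha(G)\,\gamma_{\rm sp}(H)+(n-\alpha(G))\,n'=\alpha(G)\,m+(n-\alpha(G))\,2m=2mn-m\,\alpha(G),$$
whence $\alpha(G)=\bigl(2mn-\gamma_{\rm sp}(G\circ H)\bigr)/m$. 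At the level of decision problems this reads: for any integer $k$, one has $\alpha(G)\ge k$ if and only if $\gamma_{\rm sp}(G\circ H)\le 2mn-km$, which is a polynomial-time many-one reduction from \textsc{Independent Set}.

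The final step is to confirm the reduction runs in polynomial time: since $m=\Delta(G)+2<n+1$, the graph $G\circ H$ has $nn'=2mn=O(n^2)$ vertices and can be constructed from $G$ in time polynomial in $n$; a single evaluation of $\gamma_{\rm sp}(G\circ H)$ then determines $\alpha(G)$ after one subtraction and one division. Hence computing (equivalently, deciding) the super domination number is at least as hard as computing (deciding) the independence number, so it is NP-Hard. I do not anticipate a genuine obstacle here: the only ingredients to verify are the elementary identity $\gamma_{\rm sp}(mK_2)=m$ and the fact that the constraint $n'-\gamma_{\rm sp}(H)>\Delta(G)+1$ can be met with $n'$ --- and therefore $|V(G\circ H)|$ --- bounded polynomially in the size of $G$, which holds because $\Delta(G)<n$.
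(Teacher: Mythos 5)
Your proof is correct and follows essentially the same route as the paper: both reduce from \textsc{Independent Set} by taking $H$ to be a disjoint union of $\Delta(G)+2$ (or more) copies of $K_2$ so that Theorem \ref{TheoremEquality} applies and yields $\gamma_{\rm sp}(G\circ H)=2mn-m\,\alpha(G)$. You merely make explicit two points the paper leaves implicit, namely the verification that $\gamma_{\rm sp}(mK_2)=m$ and the polynomial size of the constructed instance.
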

   
   \begin{proof}
   Let $G$ be a graph of order $n$ and maximum degree $\Delta(G)$. By Theorem \ref{TheoremEquality}, for any integer $t> \Delta(G)+1$ we have that
   $$\gamma_{\rm sp}\left(G\circ \bigcup_{i=1}^tK_2\right)=t(2n-\alpha(G)).$$
   Therefore, since the problem of computing the independence number of a graph is {\rm NP}-Hard, we conclude that  the problem of finding the super domination number of a graph is {\rm NP}-Hard too.
   \end{proof}

The remaining results of this subsection concern the case in which we fix the first factor in the lexicographic product.  

\begin{proposition}\label{G-Complete}
Let $H$ be a noncomplete graph of order $n'$ and let $n\ge 2$ be an integer. Then the following assertions hold.
\begin{itemize}
\item If $\gamma_{\rm sp}(H)=n'-1$, then $\gamma_{\rm sp}(K_n \circ H)= nn'-2$.
\item If $\gamma_{\rm sp}(H)\le n'-2$, then $\gamma_{\rm sp}(K_n \circ H)= n'(n-1)+\gamma_{\rm sp}(H)$.
\end{itemize}
\end{proposition}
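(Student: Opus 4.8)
The plan is to prove each of the two equalities by combining an upper bound that comes for free from the general theorems already established with a matching lower bound obtained by examining the structure of an optimal super dominating set of $K_n\circ H$. For the upper bounds there is essentially nothing new to do. When $\gamma_{\rm sp}(H)=n'-1$, noncompleteness of $H$ means $H\not\cong K_{n'}$, so Theorem \ref{MainTheorem} gives $\gamma_{\rm sp}(K_n\circ H)\le nn'-\alpha_2(K_n)$; since any three vertices of $K_n$ induce a triangle, $\alpha_2(K_n)=2$ for every $n\ge 2$, hence $\gamma_{\rm sp}(K_n\circ H)\le nn'-2$. When $\gamma_{\rm sp}(H)\le n'-2$, Theorem \ref{BoundMin} already gives $\gamma_{\rm sp}(K_n\circ H)\le n'(n-1)+\gamma_{\rm sp}(H)$.

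For the lower bounds I would set up a single piece of machinery and then specialize. Let $W$ be a $\gamma_{\rm sp}(K_n\circ H)$-set and let $A=\{x\in V(K_n):\overline{W}_x\ne\emptyset\}$. Since $\gamma_{\rm sp}(K_n\circ H)=nn'-|\overline{W}|$ and $|\overline{W}|=\sum_{x\in A}|\overline{W}_x|$, the whole problem is to bound $|\overline{W}|$ from above. The first step is to show $|A|\le 2$: if $x_1,x_2,x_3$ were three distinct elements of $A$, pick $h_i\in\overline{W}_{x_i}$ and let $(u,v)\in W$ be an external private neighbour of $(x_1,h_1)$, so $N(u,v)\cap\overline{W}=\{(x_1,h_1)\}$; because $K_n$ is complete, if $u\ne x_i$ then $(x_i,h_i)\in N(u,v)$, and for $i=2,3$ we have $(x_i,h_i)\ne(x_1,h_1)$, forcing $u=x_2$ and $u=x_3$, a contradiction. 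The second step splits into two cases. If $|A|=2$, then applying Lemma \ref{adjacentcopies} to the edge of $K_n$ joining the two vertices of $A$ gives $|\overline{W}_x|=1$ for each of them, so $|\overline{W}|=2$. If $|A|\le 1$, then either $\overline{W}=\emptyset$ or $\overline{W}\subseteq\{x\}\times V(H)$ for a single $x$, and in the latter case Lemma \ref{important} gives $|W_x|\ge\gamma_{\rm sp}(H)$, i.e. $|\overline{W}|=|\overline{W}_x|\le n'-\gamma_{\rm sp}(H)$.

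It then remains to plug in the hypothesis of each item. If $\gamma_{\rm sp}(H)=n'-1$: the case $|A|=2$ gives $|\overline{W}|=2$, while $|A|\le 1$ gives $|\overline{W}|\le n'-(n'-1)=1$; either way $|\overline{W}|\le 2$, so $\gamma_{\rm sp}(K_n\circ H)\ge nn'-2$, matching the upper bound and giving $\gamma_{\rm sp}(K_n\circ H)=nn'-2$. If $\gamma_{\rm sp}(H)\le n'-2$: the case $|A|=2$ gives $|\overline{W}|=2\le n'-\gamma_{\rm sp}(H)$, while $|A|\le 1$ gives $|\overline{W}|\le n'-\gamma_{\rm sp}(H)$ directly; hence $\gamma_{\rm sp}(K_n\circ H)\ge nn'-(n'-\gamma_{\rm sp}(H))=n'(n-1)+\gamma_{\rm sp}(H)$, matching the upper bound.

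The one step that needs care is the claim $|A|\le 2$: one must use that in $K_n$ avoiding adjacency to a vertex $(x_i,h_i)$ of $\overline{W}$ with $x_i\ne x_1$ forces the private neighbour's first coordinate to be exactly $x_i$, and that no vertex can equal two distinct $x_i$'s (the subcase $u=x_1$ is harmless, since then $(u,v)$ is adjacent to both $(x_2,h_2)$ and $(x_3,h_3)$, giving the contradiction at once). Everything else is routine bookkeeping with the numbers $|\overline{W}_x|$ in combination with Lemmas \ref{important} and \ref{adjacentcopies}.
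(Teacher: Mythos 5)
Your proposal is correct and follows essentially the same route as the paper: the upper bounds come from the general theorems with $\alpha(K_n)=1$, $\alpha_2(K_n)=2$, and the lower bound is obtained by splitting on whether $\overline{W}$ meets two distinct $H$-copies (forcing $|\overline{W}|=2$) or is confined to a single copy (where Lemma \ref{important} applies). The only cosmetic differences are that you invoke Theorem \ref{BoundMin} instead of Theorem \ref{MainTheorem} for the second upper bound and route the ``two copies'' case through your $|A|\le 2$ claim plus Lemma \ref{adjacentcopies}, where the paper argues directly that $N(u,v)\subseteq N(x,y)\cup N(x',y')$.
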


\begin{proof}
Since $\alpha(K_n)=1$ and $\alpha_2(K_n)=2$, by Theorem~\ref{MainTheorem} we immediately have that if $\gamma_{\rm sp}(H)=n'-1$, then $\gamma_{\rm sp}(K_n \circ H)\le  nn'-2$ and if $\gamma_{\rm sp}(H)\le n'-2$, then $\gamma_{\rm sp}(K_n \circ H)\le  n'(n-1)+\gamma_{\rm sp}(H)$.

Now, let $W$ be a $\gamma_{\rm sp}(K_n\circ H)$-set.  Notice that, since $K_n\circ H\not \in \mathcal{F}$, Theorem \ref{familyF} leads to $|\overline{W}|\ge 2$. If $(x,y),(x',y') \in \overline{W} $, for $x\ne x'$, then $N(u,v)\subseteq N(x,y)\cup N(x',y')$, for every $(u,v) \in V(K_n)\times V(H)\setminus \{(x,y),(x',y')\}$, which means that $|W|= nn'-2$.  Furthermore, if $(x,y),(x,y')\not \in W$, then $\overline{W}\subseteq \{x\}\times V(H)$, and by Lemma \ref{important} we have that  $|W|\ge (n-1)n'+ \gamma_{\rm sp}(H)$. Hence, $$\gamma_{\rm sp}(K_n \circ H)\ge  \min\{nn'-2, n'(n-1)+\gamma_{\rm sp}(H)\}.$$
Thus, if $\gamma_{\rm sp}(H)\le n'-2$, then $\gamma_{\rm sp}(K_n \circ H)\ge   n'(n-1)+\gamma_{\rm sp}(H)$. Finally, if  
 $\gamma_{\rm sp}(H)=n'-1$, then $\gamma_{\rm sp}(K_n \circ H)\ge nn'-2$.     
\end{proof}

For any complete bipartite graph $K_{r,t}$, where  $r\le t$ and $t\ge 2$, we have $ \alpha(K_{r,t})=\alpha_2(K_{r,t})=t$. Therefore, we can state the following proposition  which shows again that the bounds in Theorem \ref{MainTheorem}  are tight.

\begin{proposition}\label{Bipartite}
For any nonempty graph $H$ and any integers $r,t$, where $r\le t$ and $t\ge 2$,
  $$\gamma_{\rm sp}(K_{r,t} \circ H) = t \gamma_{\rm sp}(H)+rn'.$$ 
\end{proposition}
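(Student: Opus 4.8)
The plan is to derive the upper bound directly from Theorem~\ref{MainTheorem} and then establish the matching lower bound by a careful case analysis on a $\gamma_{\rm sp}(K_{r,t}\circ H)$-set. For the upper bound, since $H$ is nonempty we know $\gamma_{\rm sp}(H)\le n'-1$; if $\gamma_{\rm sp}(H)=n'-1$ and $H\cong K_{n'}$ the inequality $\gamma_{\rm sp}(K_{r,t}\circ H)\le t\gamma_{\rm sp}(H)+rn'$ follows from the first assertion of Theorem~\ref{MainTheorem} together with $\alpha(K_{r,t})=t$ and the observation that $t\gamma_{\rm sp}(K_{n'})+rn'=(t+r)n'-t=nn'-\alpha_2(K_{r,t})$ since $\alpha_2(K_{r,t})=t$; and if $H\not\cong K_{n'}$ (including the case $\gamma_{\rm sp}(H)\le n'-2$) the bound $\gamma_{\rm sp}(K_{r,t}\circ H)\le nn'-\alpha_2(K_{r,t})=nn'-t$ is exactly the second assertion of Theorem~\ref{MainTheorem}, and again $nn'-t=t(n'-1)+rn'\ge t\gamma_{\rm sp}(H)+rn'$ with equality precisely when $\gamma_{\rm sp}(H)=n'-1$. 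So the upper bound $\gamma_{\rm sp}(K_{r,t}\circ H)\le t\gamma_{\rm sp}(H)+rn'$ always holds.

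For the lower bound, let $U$ and $V$ be the two parts of $K_{r,t}$ with $|U|=r\le t=|V|$, and let $W$ be a $\gamma_{\rm sp}(K_{r,t}\circ H)$-set. Set $X=\{x\in V(K_{r,t}):|W_x|<n'\}$. By Lemma~\ref{important}, $\sum_{x}|W_x|\ge |X|\gamma_{\rm sp}(H)+(n-|X|)n'=nn'-|X|(n'-\gamma_{\rm sp}(H))$, so it suffices to show that either $|X|\le t$, or else the extra savings force $|W|$ back up. The heart of the argument is Lemma~\ref{adjacentcopies}: if $x\in U$ and $x'\in V$ both have nonempty complement fibres $\overline{W}_x,\overline{W}_{x'}$, then $|\overline{W}_x|=|\overline{W}_{x'}|=1$. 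So if $X$ meets both parts, every vertex of $X$ has exactly one missing vertex in its fibre, i.e. $|W_x|=n'-1$ for all $x\in X$; in that case $n'-\gamma_{\rm sp}(H)=1$ is forced (otherwise one could replace a full fibre over a neighbour or shrink a fibre over a vertex of $X$ down to a $\gamma_{\rm sp}(H)$-set, contradicting minimality — this is the same substitution argument as in Theorem~\ref{TheoremEquality}), and then $|W|=nn'-|X|\ge nn'-n=nn'-(r+t)$. But $nn'-(r+t)$ could be smaller than $t(n'-1)+rn'=nn'-t$ only if $r>0$, which it is, so this branch actually needs more care: I would instead argue that when $X$ meets only one part the bound is immediate, and when $X$ meets both parts I would show $\overline{W}$ cannot be too large by noting that any two vertices of $\overline{W}$ lying in fibres over a common part are non-adjacent, while any vertex of $\overline{W}$ needs a private neighbour, and count edges carefully. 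Concretely: if $X\subseteq V$ then $|X|\le t$ and we are done; if $X\subseteq U$ then $|X|\le r\le t$ and we are done; and if $X$ meets both parts, then by Lemma~\ref{adjacentcopies} each $|W_x|=n'-1$ for $x\in X$, and one shows $|X\cap U|$ and $|X\cap V|$ cannot both be large because the external private neighbours of the missing vertices must themselves lie in $W$ within the same fibre, which bounds how many fibres can be deficient.

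The main obstacle I anticipate is the mixed case where the deficient set $X$ intersects both sides of the bipartition: there one must combine Lemma~\ref{adjacentcopies} (forcing single-vertex deficiencies) with a minimality/exchange argument (forcing $\gamma_{\rm sp}(H)=n'-1$ when such a configuration persists) to show that the true extremal configuration is $X=V$, the side of size $t$, with each deficient fibre reduced to a $\gamma_{\rm sp}(H)$-set when $\gamma_{\rm sp}(H)\le n'-2$, or to $n'-1$ vertices when $\gamma_{\rm sp}(H)=n'-1$. Once it is established that $|X|\le t$ and $|W_x|\ge\gamma_{\rm sp}(H)$ for $x\in X$ while $|W_x|=n'$ for $x\notin X$, the computation $|W|\ge t\gamma_{\rm sp}(H)+(n-t)n'=t\gamma_{\rm sp}(H)+rn'$ closes the proof. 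In the write-up I would handle $\gamma_{\rm sp}(H)=n'-1$ and $\gamma_{\rm sp}(H)\le n'-2$ separately, since in the former case Theorem~\ref{TheoremEquality} does not apply (as $n'-\gamma_{\rm sp}(H)=1\le\Delta(K_{r,t})+1$) and the lower bound $\gamma_{\rm sp}(K_{r,t}\circ H)\ge nn'-t$ must be extracted by hand from Lemma~\ref{adjacentcopies}, exactly as in the complete-graph case treated in Proposition~\ref{G-Complete}.
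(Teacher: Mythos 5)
Your upper bound is correct, though you make it harder than necessary: the first assertion of Theorem~\ref{MainTheorem} with $\alpha(K_{r,t})=t$ already gives $\gamma_{\rm sp}(K_{r,t}\circ H)\le t\gamma_{\rm sp}(H)+rn'$ for every nonempty $H$, with no case split on $H$. The one-sided cases of your lower bound ($X$ contained in a single part of the bipartition) also go through via Lemma~\ref{important}, essentially as in the paper's proof. The genuine gap is the mixed case, which you explicitly leave open. Your stated reason for why $|X\cap U|$ and $|X\cap V|$ cannot both be large --- that ``the external private neighbours of the missing vertices must themselves lie in $W$ within the same fibre'' --- is false: a private neighbour of a missing vertex $(x,y)$ with $x\in U$ may perfectly well sit in a fibre over $V$. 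Likewise, the exchange argument you borrow from Theorem~\ref{TheoremEquality} to force $n'-\gamma_{\rm sp}(H)=1$ does not apply, since it needs $n'-\gamma_{\rm sp}(H)>\Delta(G)+1$, which is not assumed here. So the branch you yourself flag as ``needing more care'' is exactly where the proof lives, and it is not supplied.

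The missing observation, which is how the paper closes this case, is much stronger than Lemma~\ref{adjacentcopies}: if $(x,y),(a,b)\in\overline{W}$ with $x$ and $a$ in opposite parts, then for \emph{every} other vertex $(u,v)$ one has $N(u,v)\subseteq N(x,y)\cup N(a,b)$. Indeed, if $u$ lies in the part of $x$, then $N(u,v)$ consists of all fibres over the opposite part (contained in $N(x,y)$) together with $\{u\}\times N_H(v)$ (contained in $N(a,b)$, since $u$ and $a$ lie in opposite parts); the other case is symmetric. Hence no third vertex can have an external private neighbour, so $\overline{W}=\{(x,y),(a,b)\}$ and $|W|=nn'-2\ge nn'-t\bigl(n'-\gamma_{\rm sp}(H)\bigr)=t\gamma_{\rm sp}(H)+rn'$, using $t\ge 2$ and $\gamma_{\rm sp}(H)\le n'-1$. (The paper phrases this step as a comparison with the upper bound, ruling out $t\ge 3$ and $\gamma_{\rm sp}(H)\le n'-2$ by contradiction and identifying $t=2$, $\gamma_{\rm sp}(H)=n'-1$ as the equality case, but the content is the same.) With this observation your case analysis closes; without it the proposal is incomplete.
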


\begin{proof}
Since  $r\le t$ and $t\ge 2$, we have $ \alpha(K_{r,t})=\alpha_2(K_{r,t})=t$. Therefore,  Theorem \ref{MainTheorem} leads to $\gamma_{\rm sp}(K_{r,t} \circ H) \le  t \gamma_{\rm sp}(H)+rn'$.  

It remains to show that $\gamma_{\rm sp}(K_{r,t} \circ H) \ge  t \gamma_{\rm sp}(H)+rn'$. Let $V(K_{r,t})=V_r\cup V_t$ where the vertices in $V_r$ have degree $t$ and the vertices in $V_t$ have degree $r$. Let $W$ be a $\gamma_{\rm sp}(K_{r,t} \circ H)$-set.  By Theorem \ref{familyF} we have that $|\overline{W}|\ge 2$ and so   we can fix $(x,y),(a,b)\in \overline{W}$ and differentiate the following two cases.
 
\noindent Case 1.  $x\in V_r$ and $a\in V_t$.  By Lemma 
\ref{adjacentcopies}, 
$| \overline{W}_x|=|  \overline{W}_a|=1$ and for any $u\in V(K_{r,t})\setminus \{x,a\}$ we have that $\overline{W}_u=\emptyset$, so that  
\begin{equation}\label{ForContradiction}
t \gamma_{\rm sp}(H)+rn'\ge |W|\ge (r+t)n'-2.
\end{equation}
If  $t=2$ and $\gamma_{\rm sp}(H)=n'-1$, then  \eqref{ForContradiction} leads to $|W|=(r+2)n'-2=2(n'-1)+rn'$, as required.  Also, if $\gamma_{\rm sp}(H)\le n'-2$, then 
\eqref{ForContradiction} leads to $t\le 1$, which is a contradiction.
Now, if $t\ge 3$, then \eqref{ForContradiction} leads to $\gamma_{\rm sp}(H)\ge \left\lceil n'-\frac{2}{t}\right\rceil=n'$, which is a contradiction again.

\noindent Case 2. 
$x,a\in V_r$ or $x,a\in V_t$. If $t=2$, then $K_{r,t} \cong C_4$ or $K_{r,t} \cong P_3$, and we are done. 
If $t\ge 3$, then by Lemma \ref{important} we have that
$|W|=\sum_{u\in V_t}|W_u|+\sum_{u\in V_r}|W_u|\ge t\gamma_{\rm sp}(H) +rn',$
as required. 
\end{proof}

It is well known that for any integer $n\ge 3$, $\alpha(C_n)=\left \lfloor  \frac{n}{2}\right\rfloor$ and it is not difficult to check that $\alpha_2(C_n)=\left \lfloor  \frac{2n}{3}\right\rfloor$. In order to study the super domination number of $C_n \circ H$ we need to state the following lemma.

\begin{lemma}
\label{LemmaCycle}
Let  $n\ge 5$ be an integer and $V(C_n)=\{v_1,v_2,\dots , v_n\}$, where $v_i$ is adjacent to $v_{i+1}$ and the subscripts are taken modulo $n$.   If $S\subseteq V(C_n)$ and  $|S|=\left \lfloor  \frac{2n}{3}\right\rfloor +1$, then there exists a subscript $i$ such that   $\{v_i,v_{i+1},\ldots ,v_{i+4}\}\subseteq S$   or  $\{v_i,v_{i+2},v_{i+3},v_{i+4}\}\subseteq S$  or $\{v_i,v_{i+1},v_{i+2},v_{i+4}\}\subseteq S$.  
\end{lemma}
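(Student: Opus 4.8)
The plan is to argue by contradiction: assume $S$ contains none of the three prescribed configurations and show this forces $|S|\le\lfloor 2n/3\rfloor$, contradicting $|S|=\lfloor 2n/3\rfloor+1$. First I would reformulate. Since $\{v_i,\dots,v_{i+4}\}\subseteq S$ already implies $\{v_i,v_{i+2},v_{i+3},v_{i+4}\}\subseteq S$, the first configuration is subsumed by the second. Thinking of the vertices of $S$ as ``ones'' and those of $\overline S$ as ``zeros'', the hypothesis becomes: no five consecutive vertices along $C_n$ spell $11111$, $10111$ or $11101$.

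Next I would analyse the cyclic structure of $\overline S$. Since $n\ge 5$ we have $|S|=\lfloor 2n/3\rfloor+1\le n-1$, so $\overline S\ne\emptyset$; put $m:=|\overline S|\ge 1$. The $m$ vertices of $\overline S$ cut $C_n$ into $m$ arcs of consecutive vertices of $S$ (some possibly empty), of lengths $a_1,\dots,a_m\ge 0$ with $\sum_k a_k=|S|=n-m$. The core claim is that the forbidden patterns translate into two arc conditions: (a) $a_k\le 4$ for every $k$, since an arc of length $\ge 5$ contains five consecutive vertices of $S$, i.e. a window $11111$; and (b) if $a_k\ge 3$ then the two arcs flanking the $k$-th arc both have length $0$, since an arc of length $\ge 3$ together with a nonempty neighbouring arc produces — across the single intervening vertex of $\overline S$ — a window $10111$ or $11101$. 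I expect step (b) to be the main obstacle: it should be settled by a brief case analysis of a window of five consecutive vertices according to how many of them lie in $\overline S$ (none yields $11111$; exactly one, sitting at relative position $1$ or $3$ of the window, yields $10111$ or $11101$; every other window is harmless), followed by rephrasing the presence of a $10111$ or $11101$ as an arc of length $\ge 3$ adjacent to a nonempty arc. Some care is needed with degenerate cases, e.g. $m=1$ or two vertices of $\overline S$ at distance $1$ (an empty arc).

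Finally I would close with a short count. Let $L=\{k:a_k\ge 3\}$ and $Z=\{k:a_k=0\}$; these are disjoint. By condition (b), the shift $k\mapsto k-1$ (indices modulo $m$) maps $L$ into $Z$, and being a bijection of $\{1,\dots,m\}$ it is injective, so $|L|\le|Z|$. Writing $|M|=m-|L|-|Z|\ge 0$ for the number of arcs of length $1$ or $2$, condition (a) gives
\[
|S|=\sum_{k}a_k\le 4|L|+2|M|=2m+2\bigl(|L|-|Z|\bigr)\le 2m=2\bigl(n-|S|\bigr),
\]
hence $3|S|\le 2n$, i.e. $|S|\le\lfloor 2n/3\rfloor$, the desired contradiction. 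Therefore $S$ must contain one of the three configurations, which proves the lemma.
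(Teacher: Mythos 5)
Your proof is correct, and it takes a genuinely different route from the paper's. The paper leans on the identity $\alpha_2(C_n)=\left\lfloor \frac{2n}{3}\right\rfloor$ quoted just before the lemma: assuming no forbidden configuration occurs, it either observes that $S$ is already a $2$-independent set, or it shifts one vertex out of each maximal run of three or four consecutive $S$-vertices into an adjacent vacant position so as to produce a $2$-independent set $S^*$ with $|S^*|=|S|>\alpha_2(C_n)$, a contradiction. You instead decompose the cycle into the $m=|\overline S|$ arcs of consecutive $S$-vertices and double-count: condition (a) ($a_k\le 4$) comes from the pattern $11111$, condition (b) (an arc of length at least $3$ forces its neighbouring arcs to be empty) comes from $10111$ and $11101$ exactly as you describe --- and your window analysis for (b) is sound, including the wrap-around cases $m=1,2$, since $n\ge 5$ guarantees that any five consecutive vertices are distinct --- and the injectivity of the cyclic shift gives $|L|\le |Z|$, hence $|S|\le 2m=2(n-|S|)$ and $|S|\le\left\lfloor \frac{2n}{3}\right\rfloor$. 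What your route buys is self-containedness and robustness: you never need the value of $\alpha_2(C_n)$ (which the paper asserts without proof), and you avoid the shifting construction, which strictly speaking requires checking that the displaced vertices land in genuinely empty positions and that shifts coming from different maximal runs do not collide. A minor remark: your condition (b) is two-sided (both flanking arcs empty) while the final count only uses the one-sided statement that the preceding arc is empty; either version suffices and both follow from your case analysis.
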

\begin{proof}   Let  $S\subseteq V(C_n)$ such that $|S|=\left \lfloor  \frac{2n}{3}\right\rfloor +1$. Suppose that $C_n$ does not contain a path $P=(v_i,v_{i+1},\ldots ,v_{i+4})$ such that all vertices of $P$ belong to $S$ or all but $v_{i+1}$ (or $v_{i+3}$) belong to $S$. 
If no path in this form contains three consecutive vertices of $C_n$, then $S$ is a $2$-independent set, which is a contradiction as $|S|=\left \lfloor  \frac{2n}{3}\right\rfloor +1>\alpha_2(C_n)$. Otherwise, the vertices in $X=S\cap V(P)$ are consecutive and   $|X|\le 4$.  Now, if $X=\{x_i,x_{i+1},x_{i+2}\}$, then $\{x_{i-1},x_{i-2},x_{i+3},x_{i+4}\}\cap S=\emptyset$ and  if $X=\{x_i,x_{i+1},x_{i+2},x_{i+3}\}$, then $\{x_{i-1},x_{i-2},x_{i+4},x_{i+5}\}\cap S=\emptyset$. Let $U$ be the set of these maximal paths where  $|X|=3$ and, analogously, let  $U'$ be the set these paths where  $|X|=4$. We assume that the labelling in all these paths is induced by the labelling in $C_n$. Next,  we can construct a set $S^*$ from $S$ by removing $x_{i+2}$ and adding $x_{i+3}$ for each path in  $U$, and by removing $x_{i+2}$ and adding $x_{i+4}$ for each path in  $U'$. Hence,  $S^*$  is a $2$-independent set, which is a contradiction, as $|S^*|=|S|=\left \lfloor  \frac{2n}{3}\right\rfloor +1>\alpha_2(C_n)$.
\end{proof}

\begin{proposition}\label{PropositionCycles} 
Let $H$ be a nonempty graph of order $n'$ and  let $n\ge 4$ be an integer. If $\gamma_{\rm sp}(H)= n'-1$ and $H\not \cong K_{n'}$,   then $$\gamma_{\rm sp}(C_n \circ H) =  nn'-\left \lfloor  \frac{2n}{3}\right\rfloor.$$
Furthermore,  if $\gamma_{\rm sp}(H)\le  n'-2$, then 
 $$\gamma_{\rm sp}(C_n \circ H) = \left \lfloor  \frac{n}{2}\right\rfloor \gamma_{\rm sp}(H)+n'\left \lceil  \frac{n}{2}\right\rceil.$$
\end{proposition}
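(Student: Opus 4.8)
The plan is to prove the two formulas separately, obtaining the upper bounds directly from Theorem \ref{MainTheorem} and concentrating all the work on the matching lower bounds.

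\medskip

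\textbf{The case $\gamma_{\rm sp}(H)=n'-1$, $H\not\cong K_{n'}$.} First I would note that the upper bound $\gamma_{\rm sp}(C_n\circ H)\le nn'-\alpha_2(C_n)=nn'-\lfloor 2n/3\rfloor$ is immediate from the second part of Theorem \ref{MainTheorem}, together with the stated fact that $\alpha_2(C_n)=\lfloor 2n/3\rfloor$. For the lower bound, let $W$ be a $\gamma_{\rm sp}(C_n\circ H)$-set and consider $\overline{W}$. Write $V(C_n)=\{v_1,\dots,v_n\}$ with $v_i\sim v_{i+1}$ mod $n$. The key structural input is Lemma \ref{adjacentcopies}: if $v_i$ and $v_{i+1}$ both have $\overline{W}_{v_i},\overline{W}_{v_{i+1}}\ne\emptyset$, then $|\overline{W}_{v_i}|=|\overline{W}_{v_{i+1}}|=1$. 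Let $S=\{v_i:\overline{W}_{v_i}\ne\emptyset\}$. Arguing toward a contradiction, suppose $\gamma_{\rm sp}(C_n\circ H)<nn'-\lfloor 2n/3\rfloor$, i.e. $|\overline{W}|>\lfloor 2n/3\rfloor$. Since $|\overline{W}_{v_i}|\le n'-\gamma_{\rm sp}(H)+1$ cannot be large in general, I would first show that in fact $|\overline{W}_{v_i}|\le 1$ for every $i$ that has a neighbour in $S$, and $|\overline{W}_{v_i}|\le n'$ trivially otherwise; but an isolated-in-$S$ vertex $v_i$ (no $S$-neighbour) can have $|\overline{W}_{v_i}|\le n'$, which is too generous. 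The finer point: such a ``solitary'' copy contributes at most $n'$, but then $v_{i-1},v_{i+1}\notin S$ contribute $0$ to $\overline{W}$, so three consecutive copies contribute at most $n'$ on average $n'/3$ per copy — less than the $2n'/3$ pattern we are trying to beat. Consequently the extremal configuration forces $S$ to behave like a $2$-independent set of $C_n$, and $|\overline{W}|>\lfloor 2n/3\rfloor\ge\alpha_2(C_n)$ gives, via Lemma \ref{LemmaCycle}, one of the three local patterns $\{v_i,\dots,v_{i+4}\}$, $\{v_i,v_{i+2},v_{i+3},v_{i+4}\}$, $\{v_i,v_{i+1},v_{i+2},v_{i+4}\}\subseteq S$ with the relevant copies carrying nonempty $\overline{W}$. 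For each of these three patterns I would derive a contradiction with Lemma \ref{adjacentcopies} (two adjacent copies forcing singletons, which then propagates incompatibly to a third copy, or yields an $N(u,v)\subseteq N(x,y)\cup N(x',y')$ relation). That forces $|\overline{W}|\le\lfloor 2n/3\rfloor$, completing the lower bound.

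\medskip

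\textbf{The case $\gamma_{\rm sp}(H)\le n'-2$.} Here the upper bound $\gamma_{\rm sp}(C_n\circ H)\le\alpha(C_n)\gamma_{\rm sp}(H)+(n-\alpha(C_n))n'=\lfloor n/2\rfloor\gamma_{\rm sp}(H)+\lceil n/2\rceil n'$ comes from the first part of Theorem \ref{MainTheorem} using $\alpha(C_n)=\lfloor n/2\rfloor$. For the matching lower bound I would again take $W$ a $\gamma_{\rm sp}(C_n\circ H)$-set and set $X=\{x\in V(C_n):|W_x|<n'\}$. The argument mirrors the proof of Theorem \ref{TheoremEquality}: I claim $X$ is an independent set of $C_n$. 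If $x,x'\in X$ are adjacent, Lemma \ref{adjacentcopies} gives $|W_x|=|W_{x'}|=n'-1$; but then I can modify $W$ on the two copies $\{x\}\times V(H)$ and $\{x'\}\times V(H)$, replacing one of them by $\{x\}\times S_2$ for a $\gamma_{\rm sp}(H)$-set $S_2$ and filling in the neighbours of $x$ in $C_n$ (which number at most $\Delta(C_n)=2$), to get a smaller super dominating set because the ``savings'' $n'-1-\gamma_{\rm sp}(H)\ge 1$ beats the ``cost'' bounded by $\Delta(C_n)$ — wait, for cycles $\Delta=2$ and the saving could be only $1$, so this needs care. Instead I would use the sharper accounting: since $\gamma_{\rm sp}(H)\le n'-2$, whenever two adjacent copies both have $|W_x|=|W_{x'}|=n'-1$, the three vertices $(x,b_1),(x,b_2)\notin W$ (possible since $|\overline{W}_x|\ge 1$... no, $|\overline{W}_x|=1$) — actually Lemma \ref{adjacentcopies} already forbids both being deficient simultaneously unless singletons, and a direct private-neighbour argument as in the last paragraph of the proof of Theorem \ref{TrivialLowerBound} (with $(a,b_1),(a,b_2),(a',b_3)\notin W$) rules this out when $\gamma_{\rm sp}(H)\le n'-2$ because it would force $\overline{W}_x$ or $\overline{W}_{x'}$ to have size $\ge 2$, contradicting Lemma \ref{adjacentcopies}. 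Hence no two vertices of $X$ are adjacent, so $|X|\le\alpha(C_n)=\lfloor n/2\rfloor$, and then by Lemma \ref{important},
\[
\gamma_{\rm sp}(C_n\circ H)=\sum_{u\in X}|W_u|+\sum_{u\notin X}|W_u|\ge|X|\gamma_{\rm sp}(H)+(n-|X|)n'=nn'-|X|(n'-\gamma_{\rm sp}(H))\ge nn'-\left\lfloor\frac n2\right\rfloor(n'-\gamma_{\rm sp}(H)),
\]
which rearranges to the claimed value.

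\medskip

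\textbf{Main obstacle.} The routine part is the second formula, which is essentially a transcription of the proof of Theorem \ref{TheoremEquality} (noting that $n'-\gamma_{\rm sp}(H)\ge 2>\Delta$ need not hold for $C_n$, so one must replace the global degree-exchange argument by the local Lemma \ref{adjacentcopies}/private-neighbour argument valid precisely because $\gamma_{\rm sp}(H)\le n'-2$). The genuinely delicate part is the first formula: showing $|\overline{W}|\le\lfloor 2n/3\rfloor$ requires combining Lemma \ref{adjacentcopies} (which limits adjacent deficient copies to singletons) with a careful case analysis of the three local patterns supplied by Lemma \ref{LemmaCycle}, and one must be attentive to the small cases $n=4$ (where $C_4$ is covered by Lemma \ref{lemma2}-type reasoning directly) and $n=5,6,7$ where $\lfloor 2n/3\rfloor$ is small enough that the patterns in Lemma \ref{LemmaCycle} may wrap around the whole cycle. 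I expect the pattern case analysis, rather than any deep idea, to be where the real work lies.
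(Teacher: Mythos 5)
Your first formula follows essentially the paper's route: the upper bound comes from Theorem \ref{MainTheorem}, and for the lower bound Lemma \ref{important} forces $|\overline{W}_x|\le n'-\gamma_{\rm sp}(H)=1$ in \emph{every} copy (your quoted bound $n'-\gamma_{\rm sp}(H)+1$ is off by one, and the ``solitary copy contributes up to $n'$'' averaging detour is both unnecessary and not valid as stated --- a single copy could contribute far more than the deficit you need to rule out; applying Lemma \ref{important} to each copy kills this immediately). Then $|\overline{W}|=|X|\ge\lfloor 2n/3\rfloor+1$, Lemma \ref{LemmaCycle} yields one of the three patterns, and in each the missing vertex of the middle copy has no private neighbour. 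That part is fine in outline.

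The second formula has a genuine gap: your claim that $X=\{x:|W_x|<n'\}$ is independent is not established, and the argument you sketch for it fails. If $x,x'\in X$ are adjacent, Lemma \ref{adjacentcopies} gives $|\overline{W}_x|=|\overline{W}_{x'}|=1$, and that situation is perfectly consistent with $W$ being a super dominating set: the private-neighbour argument at the end of the proof of Theorem \ref{TrivialLowerBound} needs \emph{two} missing vertices in the same copy, which you do not have, so nothing ``forces $\overline{W}_x$ to have size $\ge 2$''. Indeed $V(C_n\circ H)\setminus\{(x_1,y),(x_2,y)\}$ is super dominating for a nonuniversal $y$, and when $\gamma_{\rm sp}(H)=n'-2$ a run of two adjacent deficient copies costs $2(n'-1)=n'+\gamma_{\rm sp}(H)$, exactly the same as one full copy plus one copy at level $\gamma_{\rm sp}(H)$, so minimality of $W$ does not exclude adjacent pairs in $X$ either. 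The paper closes this without claiming independence: it decomposes $X$ into maximal runs of consecutive vertices, shows a run of length $5$ is impossible (the middle copy loses its private neighbour), and for a run $S$ of length $2$, $3$ or $4$ picks an independent $S'\subseteq S$ and uses the counting inequality $\sum_{x\in S}|W_x|=|S|(n'-1)\ge(|S|-|S'|)n'+|S'|\gamma_{\rm sp}(H)$ --- valid precisely because $n'-\gamma_{\rm sp}(H)\ge 2$ --- to replace $X$ by an independent set $X'$ and then run your final displayed chain with $X'$ in place of $X$. This replacement step is the idea your proposal is missing.
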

\begin{proof}
Let $V(C_n)=\{x_1,x_2,\dots , x_n\}$, where $x_i$ is adjacent to $x_{i+1}$ and the subscripts are taken modulo $n$.  
Let $W$ be a $\gamma_{\rm sp}(C_n\circ H)$-set and 
$$X=\{x\in V(C_n):\, |W_x|<n'\}.$$

We first consider the case  $\gamma_{\rm sp}(H)= n'-1$ and $H\not \cong K_{n'}$. By Theorem \ref{MainTheorem} we have that $\gamma_{\rm sp}(C_n \circ H) \le  nn'-\left \lfloor  \frac{2n}{3}\right\rfloor.$ Suppose $\gamma_{\rm sp}(C_n \circ H) < nn'-\left \lfloor  \frac{2n}{3}\right\rfloor$.   From Lemma~\ref{important} we know that $|\overline{W_x}|= 1$ for every $x\in X$, so that $|\overline{W}|=|X|\geq \left \lfloor  \frac{2n}{3}\right\rfloor+1$. If $n=4$, then at least three  vertices, say $(x_1,y_1),(x_2,y_2),(x_3,y_3)$ belong to $\overline{W}$ and, in such a case,  $N(x_1,y_1)\subseteq N(x_2,y_2)\cup N(x_3,y_3)$, so $W$ is not a super dominating set of $C_4\circ H$, which is a contradiction. If  $n\geq 5$, then  by Lemma \ref{LemmaCycle}  there exists a subscript $i$ such that   $\{x_{i},x_{i+1},\ldots , x_{i+4}\}\subseteq X$ or $\{x_{i},x_{i+2},x_{i+3} , x_{i+4}\}\subseteq X$  or $\{x_{i},x_{i+1},x_{i+2} , x_{i+4}\}\subseteq X$.  In all these cases  $N(x_{i+2})\subseteq \cup_{j=i}^{i+4}N(x_j)$, so that  $W$ is not a super dominating set of $C_n\circ H$, which is a contradiction.  Therefore, $\gamma_{\rm sp}(C_n \circ H) =  nn'-\left \lfloor  \frac{2n}{3}\right\rfloor.$ 

From now on we assume that $\gamma_{\rm sp}(H)\le  n'-2$. By Theorem \ref{MainTheorem} we have that $\gamma_{\rm sp}(C_n \circ H) \le  \left \lfloor  \frac{n}{2}\right\rfloor \gamma_{\rm sp}(H)+n'\left \lceil  \frac{n}{2}\right\rceil.$
We will show that $\gamma_{\rm sp}(C_n \circ H) \ge  \left \lfloor  \frac{n}{2}\right\rfloor \gamma_{\rm sp}(H)+n'\left \lceil  \frac{n}{2}\right\rceil.$ 
If $X$ is an independent set, then by Lemma \ref{important},
\begin{align*}
|W|=& \sum_{x\in X}|W_x|+ \sum_{x\not\in X}|W_x|\\
    \ge & |X|\gamma_{\rm sp}(H)+(n-|X|)n'\\
    = & nn'-|X|(n'-\gamma_{\rm sp}(H))\\
       \ge & nn'-\alpha(C_n)(n'-\gamma_{\rm sp}(H))\\
        =&\left \lfloor  \frac{n}{2}\right\rfloor \gamma_{\rm sp}(H)+n'\left \lceil  \frac{n}{2}\right\rceil,
\end{align*}
as required.
Suppose that $X$ is not independent. Let $S$ be a maximal subset of $X$ which is composed by consecutive vertices of $C_n$. If $S=\{x_i,x_{i+1},x_{i+2},x_{i+3},x_{i+4}\}$, then $$N(x_{i+2})\subseteq \bigcup_{x_j\in S\setminus \{x_{i+2}\}}N(x_{j}),$$ so that we deduce  that $W$ is not a super dominating set of $C_n\circ H$, which is a contradiction. Thus, $|S|\le 4$. We now fix an independent set $S'\subseteq S$ in the following way. If $S=\{x_i,x_{i+1},x_{i+2},x_{i+3}\}$, then $S'=\{x_i,x_{i+3}\}$, if $S=\{x_i,x_{i+1},x_{i+2}\}$, then $S'=\{x_i,x_{i+2}\}$ and, if $S=\{x_i,x_{i+1}\}$, then $S'=\{x_i\}$. Hence, we can construct an independent set $X'\subseteq X$ by replacing every maximal set $S$ defined as above with the corresponding set $S'$. Since $\gamma_{\rm sp}(H)\le  n'-2$,  by  Lemmas \ref{important} and \ref{adjacentcopies} we have that for any $S$ defined as above,
$$\sum_{x\in S}|W_x|=|S|(n'-1)\ge (|S|-|S'|)n'+|S'|\gamma_{\rm sp}(H),$$
which implies that
\begin{align*}
|W|=& \sum_{x\in X'}|W_x|+ \sum_{x\not\in X'}|W_x|\\
    \ge & |X'|\gamma_{\rm sp}(H)+(n-|X'|)n'\\
    = & nn'-|X'|(n'-\gamma_{\rm sp}(H))\\
       \ge & nn'-\alpha(C_n)(n'-\gamma_{\rm sp}(H))\\
        =&\left \lfloor  \frac{n}{2}\right\rfloor \gamma_{\rm sp}(H)+n'\left \lceil  \frac{n}{2}\right\rceil,
\end{align*}

Therefore, 
$\gamma_{\rm sp}(C_n \circ H) = \left \lfloor  \frac{n}{2}\right\rfloor \gamma_{\rm sp}(H)+n'\left \lceil  \frac{n}{2}\right\rceil.$
\end{proof}

For the case of path graphs we have  $\alpha(P_n)= \left \lceil  \frac{n}{2}\right\rceil$ and   $\alpha_2(P_n)=\left \lceil  \frac{2n}{3}\right\rceil$. To complete the study on the super domination of  number of $P_n \circ H$
we need to state the following lemma.

\begin{lemma}
\label{proppath}
Let  $n\ge 4$ be an integer and $V(P_n)=\{v_1,\dots, v_n\}$, where $v_i$ is adjacent to $v_{i+1}$ for any $i\in \{1,\dots , n-1\}$. If $S\subseteq V(P_n)$  and $|S|=\left \lceil  \frac{2n}{3}\right\rceil +1$, then   at least one of the following statements hold.
\begin{itemize}
\item[(a)] There exists a subscript $i\le n-4$ such that  $\{v_i,v_{i+1},\ldots ,v_{i+4}\}\subseteq S$ or $\{v_i,v_{i+1}, v_{i+2},v_{i+4}\}\subseteq S$
or $\{v_i,v_{i+2}, v_{i+3},v_{i+4}\}\subseteq S$.

\item[(b)] $\{v_1,v_2,v_3\} \subseteq S$ or $\{v_{n-2},v_{n-1},v_{n}\}\subseteq S$.
\end{itemize}
\end{lemma}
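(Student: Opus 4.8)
The plan is to mimic the structure of the proof of Lemma~\ref{LemmaCycle}, adapting it to the path, where the two endpoints create the extra possibility~(b). Assume $S\subseteq V(P_n)$ with $|S|=\left\lceil\frac{2n}{3}\right\rceil+1>\alpha_2(P_n)$, and suppose for contradiction that neither (a) nor (b) holds. Since $|S|>\alpha_2(P_n)$, the set $S$ is not $2$-independent, so there is at least one vertex $v_j\in S$ with $|N(v_j)\cap S|=2$, i.e.\ $\{v_{j-1},v_j,v_{j+1}\}\subseteq S$; in particular $2\le j\le n-1$. Negation of (b) rules out $j=2$ with $v_1\in S$ and $j=n-1$ with $v_n\in S$, so every such ``triple'' of consecutive vertices of $S$ actually sits at an interior position where both a left neighbour $v_{j-2}$ and a right neighbour $v_{j+2}$ exist in $P_n$.

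Next I would isolate the maximal runs of consecutive vertices of $C_n$... of $P_n$ that lie in $S$ and contain such a triple. Fix such a maximal run $X=\{v_i,v_{i+1},\dots,v_{i+\ell-1}\}\subseteq S$ with $\ell\ge 3$. First, $\ell\le 4$: if $\ell\ge 5$ then $\{v_i,\dots,v_{i+4}\}\subseteq S$ and, since by the previous paragraph this run is interior enough that $i\le n-4$, statement~(a) holds, a contradiction. By maximality, the vertices immediately flanking $X$ are not in $S$: if $\ell=3$ (run $\{v_i,v_{i+1},v_{i+2}\}$), then $v_{i-1},v_{i+3}\notin S$, and moreover $v_{i-2},v_{i+4}\notin S$ — otherwise e.g.\ $\{v_{i+1},v_{i+2},v_{i+3},v_{i+4}\}$ would contain a triple at a position covered by~(a), or $\{v_{i-2},v_i,v_{i+1},v_{i+2}\}=\{v_{i-2},v_{(i-2)+2},v_{(i-2)+3},v_{(i-2)+4}\}$ witnesses~(a); if $\ell=4$ (run $\{v_i,\dots,v_{i+3}\}$), then similarly $v_{i-1},v_{i-2},v_{i+4},v_{i+5}\notin S$, using negation of~(a) applied to the various $P_4$-patterns. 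This is the step I expect to require the most care: bookkeeping which of the three forbidden length-$5$ patterns is triggered by shifting a triple one or two steps, and checking the index $i\le n-4$ constraint holds because the run is interior.

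Finally, as in Lemma~\ref{LemmaCycle}, build a $2$-independent set $S^{*}$ of the same cardinality, contradicting $|S|>\alpha_2(P_n)$. Let $U$ be the collection of maximal runs of the above type with $\ell=3$ and $U'$ those with $\ell=4$; for a run $\{v_i,v_{i+1},v_{i+2}\}\in U$ replace $v_{i+2}$ by $v_{i+3}$, and for a run $\{v_i,v_{i+1},v_{i+2},v_{i+3}\}\in U'$ replace $v_{i+2}$ by $v_{i+4}$. The gaps established in the previous paragraph guarantee the newly inserted vertices were not in $S$ and that the modifications do not create a vertex of $S^{*}$ with two neighbours in $S^{*}$ (each modification locally turns a ``triple'' into two nonadjacent pieces, and the flanking vertices are absent), so $S^{*}$ is $2$-independent with $|S^{*}|=|S|=\left\lceil\frac{2n}{3}\right\rceil+1>\alpha_2(P_n)$, a contradiction. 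Hence (a) or (b) must hold.
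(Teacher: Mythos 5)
Your argument is correct and is essentially the paper's own proof: bound every maximal run of consecutive $S$-vertices by $4$ using the negations of (a) and (b), establish the two-vertex gaps flanking each run of length $3$ or $4$ (with the truncated gaps at the two ends of the path), and shift one vertex per run to produce a $2$-independent set of size $\lceil 2n/3\rceil+1>\alpha_2(P_n)$, a contradiction. The only cosmetic differences are that you shift rightward (as in Lemma~\ref{LemmaCycle}) while the paper shifts leftward for paths, and your stated reason that $v_{i+4}\notin S$ for a length-$3$ run is garbled (the correct witness is the pattern $\{v_i,v_{i+1},v_{i+2},v_{i+4}\}$ from (a), which applies since $v_{i+4}$ existing forces $i\le n-4$); neither affects the validity of the argument.
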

\begin{proof} Suppose $|S|=\left \lceil  \frac{2n}{3}\right\rceil +1$ and  conditions $(a)$ and $(b)$ do not hold. 
If  $S$ is a $2$-independent set, then  $|S|=\left \lceil  \frac{2n}{3}\right\rceil +1>\alpha_2(P_n)$, which is a contradiction. Otherwise, for any maximal set  $X\subseteq S$ composed by consecutive vertices we have $|X|\le 4$. We consider two cases depending on the cardinality of $X$:
\begin{itemize}  
\item  $X=\{x_i,x_{i+1},x_{i+2}\}$. If $i=2$, then $\{x_{1},x_{5},x_{6}\}\cap S=\emptyset$, if $i=n-3$, then  $\{x_{n},x_{n-4},x_{n-5}\}\cap S=\emptyset$, while in other cases  $\{x_{i-1},x_{i-2},x_{i+3},x_{i+4}\}\cap S=\emptyset$.
\item $X=\{x_i,x_{i+1},x_{i+2},x_{i+3}\}$.  If $i=2$, then $\{x_{1},x_{6},x_{7}\}\cap S=\emptyset$, if $i=n-4$ then $\{x_{n},x_{n-5},x_{n-6}\}\cap S=\emptyset$, while  in other cases $\{x_{i-1},x_{i-2},x_{i+4},x_{i+5}\}\cap S=\emptyset$. 
\end{itemize}

Let $U$ be the set of these maximal sets of cardinality  $|X|=3$ and, analogously, let  $U'$ be the set these maximal sets of cardinality  $|X|=4$. We assume that the labelling in all these sets is induced by the labelling in $P_n$. Next,  we can construct a set $S^*$ from $S$ by removing $x_{i}$ and adding $x_{i-1}$ for each set in  $U$, and by removing $x_{i+1}$ and adding $x_{i-1}$ for each set in  $U'$. Hence,  $S^*$  is a $2$-independent set, which is a contradiction, as $|S^*|=|S|=\left \lceil  \frac{2n}{3}\right\rceil +1>\alpha_2(P_n)$.
\end{proof}

\begin{proposition} \label{PropositionPaths} Let $H$ be a nonempty graph of order $n'$ and  let $n\ge 2$ be an integer. If $\gamma_{\rm sp}(H)= n'-1$ and $H\not \cong K_{n'}$,   then
$$\gamma_{\rm sp}(P_n \circ H) =  nn'-\left \lceil  \frac{2n}{3}\right\rceil.$$
Furthermore,  if $\gamma_{\rm sp}(H)\le  n'-2$, then 
$$\gamma_{\rm sp}(P_n \circ H) =  \left \lceil  \frac{n}{2}\right\rceil \gamma_{\rm sp}(H)+n'\left \lfloor  \frac{n}{2}\right\rfloor.$$ 
\end{proposition}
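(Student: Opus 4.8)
The plan is to follow the proof of Proposition~\ref{PropositionCycles} almost verbatim, replacing the cycle invariants by $\alpha(P_n)=\left\lceil n/2\right\rceil$ and $\alpha_2(P_n)=\left\lceil 2n/3\right\rceil$ and replacing Lemma~\ref{LemmaCycle} by Lemma~\ref{proppath}. Both upper bounds are immediate from Theorem~\ref{MainTheorem}: $\gamma_{\rm sp}(H)=n'-1$ and $H\not\cong K_{n'}$ give $\gamma_{\rm sp}(P_n\circ H)\le nn'-\alpha_2(P_n)=nn'-\left\lceil 2n/3\right\rceil$, and $\gamma_{\rm sp}(H)\le n'-2$ gives $\gamma_{\rm sp}(P_n\circ H)\le \alpha(P_n)\gamma_{\rm sp}(H)+(n-\alpha(P_n))n'=\left\lceil n/2\right\rceil\gamma_{\rm sp}(H)+\left\lfloor n/2\right\rfloor n'$. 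The only part that really needs $n\ge 4$ is the lower bound for the first formula, since Lemma~\ref{proppath} assumes $n\ge 4$; so I would first dispose of $n\in\{2,3\}$ there. For $n=2$ we have $P_2\cong K_2$ and the first formula is precisely Theorem~\ref{TrivialLowerBound}. For $n=3$, if the claimed value were not attained then, since $\gamma_{\rm sp}(H)=n'-1$ forces $|\overline{W}_x|\le 1$ for every column $x$ by Lemma~\ref{important}, all three columns of $P_3$ would be deficient, and choosing $(x_j,y_j)\in\overline{W}$ for $j=1,2,3$ one has $N(x_1,y_1)=(\{x_2\}\times V(H))\cup(\{x_1\}\times N_H(y_1))\subseteq N(x_2,y_2)\cup N(x_3,y_3)$, so $(x_1,y_1)$ has no external private neighbour, a contradiction. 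The second formula is proved uniformly for all $n\ge 2$ by the dichotomy below.

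For the lower bound of the first formula with $n\ge 4$, let $W$ be a $\gamma_{\rm sp}(P_n\circ H)$-set and put $X=\{x\in V(P_n):|W_x|<n'\}$. Since $\gamma_{\rm sp}(H)=n'-1$, Lemma~\ref{important} gives $|W_x|\ge n'-1$, hence $|\overline{W}_x|\le 1$ for all $x$ and $|\overline{W}|=|X|$. If $\gamma_{\rm sp}(P_n\circ H)<nn'-\left\lceil 2n/3\right\rceil$, then $|X|=|\overline{W}|\ge\left\lceil 2n/3\right\rceil+1$, so Lemma~\ref{proppath} applies with $S=X$. In configuration (a) one of the three listed patterns around a vertex $x_{i+2}\in X$ lies inside $X$, and in each of them the neighbourhood $N(x_{i+2},y_{x_{i+2}})=(\{x_{i+1},x_{i+3}\}\times V(H))\cup(\{x_{i+2}\}\times N_H(y_{x_{i+2}}))$ is contained in the union of the neighbourhoods of the remaining pattern vertices: the columns $\{x_{i+1}\}\times V(H)$ and $\{x_{i+3}\}\times V(H)$ are absorbed by the pattern vertices adjacent to $x_{i+1}$ and to $x_{i+3}$, and $\{x_{i+2}\}\times N_H(y_{x_{i+2}})$ by a pattern vertex adjacent to $x_{i+2}$; since all these vertices belong to $\overline{W}$, $(x_{i+2},y_{x_{i+2}})$ has no external private neighbour, a contradiction. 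In configuration (b), say $\{x_1,x_2,x_3\}\subseteq X$; using that $x_1$ has degree $1$ in $P_n$ one gets $N(x_1,y_{x_1})\subseteq N(x_2,y_{x_2})\cup N(x_3,y_{x_3})$, the same contradiction (the case $\{x_{n-2},x_{n-1},x_n\}\subseteq X$ is symmetric). Hence $\gamma_{\rm sp}(P_n\circ H)=nn'-\left\lceil 2n/3\right\rceil$.

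For the lower bound of the second formula, take $W$ and $X$ as above and split on whether $X$ is independent. If it is, Lemma~\ref{important} gives $|W|\ge |X|\gamma_{\rm sp}(H)+(n-|X|)n'=nn'-|X|(n'-\gamma_{\rm sp}(H))\ge nn'-\alpha(P_n)(n'-\gamma_{\rm sp}(H))=\left\lceil n/2\right\rceil\gamma_{\rm sp}(H)+\left\lfloor n/2\right\rfloor n'$, using that $X$ is independent, so $|X|\le\alpha(P_n)$, and that $n'-\gamma_{\rm sp}(H)>0$. If $X$ is not independent, consider its maximal blocks of consecutive vertices of $P_n$: by Lemma~\ref{adjacentcopies} each vertex of a block of size $\ge 2$ has $|\overline{W}_x|=1$, hence $|W_x|=n'-1$, and a block of size $\ge 5$ yields (via its central vertex, exactly as in the first formula) a contradiction, so every block has size at most $4$. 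I would replace each block $S$ of size $\ge 2$ by an independent subset $S'$, namely $\{x_i,x_{i+3}\}$, $\{x_i,x_{i+2}\}$, $\{x_i\}$ according as $|S|=4,3,2$, keep the singleton blocks unchanged, and set $X'$ equal to the union of all these sets; then $X'$ is independent and $X'\subseteq X$. For a block of size $\ge 2$ the inequality $\sum_{x\in S}|W_x|=|S|(n'-1)\ge (|S|-|S'|)n'+|S'|\gamma_{\rm sp}(H)$ reduces to $|S'|(n'-\gamma_{\rm sp}(H))\ge |S|$, which holds for each of the three sizes since $n'-\gamma_{\rm sp}(H)\ge 2$; a singleton $\{x\}\subseteq X$ contributes $|W_x|\ge\gamma_{\rm sp}(H)$ by Lemma~\ref{important}. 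Adding these bounds over all blocks together with $|W_x|=n'$ for $x\notin X$ gives $|W|\ge |X'|\gamma_{\rm sp}(H)+(n-|X'|)n'\ge nn'-\alpha(P_n)(n'-\gamma_{\rm sp}(H))=\left\lceil n/2\right\rceil\gamma_{\rm sp}(H)+\left\lfloor n/2\right\rfloor n'$, which with the upper bound completes the proof.

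The genuinely new combinatorial ingredient, Lemma~\ref{proppath}, is already available, so the bulk of the work is bookkeeping, and I expect the main obstacle to be the boundary: one must check that in each of the three bad patterns of Lemma~\ref{proppath} the column indexed by $x_{i+2}$ is the one whose full $P_n\circ H$-neighbourhood is swallowed by the others, and that the degree-$1$ behaviour of the two endpoints is exactly what makes configuration (b) --- three consecutive deficient columns at an end of $P_n$ --- already fatal, in contrast with the four- and five-vertex obstructions used in the cycle case; the block-replacement inequality then has to be verified uniformly over the block sizes $2$, $3$ and $4$. None of these steps is deep, but each requires care with indices near the ends of the path.
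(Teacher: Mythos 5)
Your proposal is correct and follows essentially the same route as the paper: the upper bounds come from Theorem~\ref{MainTheorem}, the first lower bound for $n\ge 4$ from Lemma~\ref{proppath} via the covering argument for $N(x_{i+2},\cdot)$ (resp.\ the endpoint argument in configuration (b)), and the second lower bound from the same block-decomposition and replacement inequality $|S|(n'-1)\ge(|S|-|S'|)n'+|S'|\gamma_{\rm sp}(H)$ used in the paper. The only (harmless) deviations are cosmetic: the paper dispatches $n=2$ and $n=3$ by citing Propositions~\ref{G-Complete} and~\ref{Bipartite} ($P_2\cong K_2$, $P_3\cong K_{1,2}$), whereas you use Theorem~\ref{TrivialLowerBound} for $n=2$ and a short direct argument for $n=3$.
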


\begin{proof} Let $V(P_n)=\{x_1,x_2,\dots , x_n\}$, where $x_i$ is adjacent to $x_{i+1}$ for every $i\in \{1,\dots , n-1\}$.  Let $W$ be a $\gamma_{\rm sp}(P_n\circ H)$-set and $$X=\{x\in V(P_n):\, |W_x|<n'\}.$$

We first consider the case  $\gamma_{\rm sp}(H)= n'-1$ and $H\not \cong K_{n'}$. By Theorem \ref{MainTheorem} we have that $\gamma_{\rm sp}(P_n \circ H) \le  nn'-\left \lceil  \frac{2n}{3}\right\rceil.$ 
It remains to show that $\gamma_{\rm sp}(P_n \circ H) \ge  nn'-\left \lceil  \frac{2n}{3}\right\rceil.$

The cases $n=2$ and $n=3$ were previously discussed in Propositions \ref{G-Complete} and \ref{Bipartite}. From now on we assume that $n\ge 4$. Suppose that $\gamma_{\rm sp}(P_n \circ H) < nn'-\left \lceil  \frac{2n}{3}\right\rceil$.  
Since $\gamma_{\rm sp}(H)= n'-1$, from Lemma~\ref{important} we know that $|\overline{W}_x|=1$ for every $x\in X$, so that $|\overline{W}|=|X|\geq \left \lceil  \frac{2n}{3}\right\rceil+1$.
By Lemma \ref{proppath} we differentiate the following cases.

\begin{itemize}
\item[(a)] There exists a subscript  $i\le n-4$ such that  $X_0=\{v_i,v_{i+1},\ldots ,v_{i+4}\}\subseteq X$ or $X_1=\{v_i,v_{i+2}, v_{i+3},v_{i+4}\}\subseteq X$
or $X_2=\{v_i,v_{i+1}, v_{i+2},v_{i+4}\}\subseteq X$. For any $l\in \{0,1,2\}$ we have that $$N(v_{i+2})\subseteq \bigcup_{v_j\in X_l\setminus \{v_{i+2}\}}N(v_j).$$

\item[(b)] $\{v_1,v_2,v_3\} \subseteq X$ or $\{v_{n-2},v_{n-1},v_{n}\}\subseteq X$. Thus,   $N(v_1)\subseteq N(v_2)\cup N(v_3)$ or $N(v_n)\subseteq N(v_{n-1})\cup N(v_{n-2})$.
\end{itemize}
According to the two cases above we conclude that 
 $W$ is not a super dominating set of $P_n\circ H$, which is a contradiction.  Therefore, $\gamma_{\rm sp}(P_n \circ H) =  nn'-\left \lceil  \frac{2n}{3}\right\rceil.$

From now on we assume that $\gamma_{\rm sp}(H)\le  n'-2$. By Theorem \ref{MainTheorem} we have that $\gamma_{\rm sp}(P_n \circ H) \le   \left \lceil  \frac{n}{2}\right\rceil \gamma_{\rm sp}(H)+n'\left \lfloor  \frac{n}{2}\right\rfloor.$ We will  show that 
$\gamma_{\rm sp}(P_n \circ H) \ge  \left \lceil  \frac{n}{2}\right\rceil \gamma_{\rm sp}(H)+n'\left \lfloor  \frac{n}{2}\right\rfloor.$
Although  this part of the proof is completely analogous to the second part of the proof of Proposition \ref{PropositionCycles}, we prefer to include it for completeness. 
If $X$ is an independent set, then by Lemma \ref{important},
\begin{align*}
|W|=& \sum_{x\in X}|W_x|+ \sum_{x\not\in X}|W_x|\\
    \ge & |X|\gamma_{\rm sp}(H)+(n-|X|)n'\\
    = & nn'-|X|(n'-\gamma_{\rm sp}(H))\\
       \ge & nn'-\alpha(P_n)(n'-\gamma_{\rm sp}(H))\\
        =&\left \lceil  \frac{n}{2}\right\rceil \gamma_{\rm sp}(H)+n'\left \lfloor  \frac{n}{2}\right\rfloor,
\end{align*}
as required.
Suppose that $X$ is not independent. Let $S$ be a maximal subset of $X$ which is composed by consecutive vertices of $P_n$. If $S=\{x_i,x_{i+1},x_{i+2},x_{i+3},x_{i+4}\}$, then $N(x_{i+2})\subseteq \cup_{j=1}^4N(x_{j})$, so that we deduce  that $W$ is not a super dominating set of $P_n\circ H$, which is a contradiction. Thus, $|S|\le 4$. We now fix an independent set $S'\subseteq S$ in the following way. If $S=\{x_i,x_{i+1},x_{i+2},x_{i+3}\}$, then $S'=\{x_i,x_{i+3}\}$, if $S=\{x_i,x_{i+1},x_{i+2}\}$, then $S'=\{x_i,x_{i+2}\}$ and, if $S=\{x_i,x_{i+1}\}$, then $S'=\{x_i\}$. Hence, we can construct an independent set $X'\subseteq X$ by replacing every maximal set $S$ defined as above with the corresponding set $S'$. Since $\gamma_{\rm sp}(H)\le  n'-2$,  by  Lemmas \ref{important} and \ref{adjacentcopies} we have that for any $S$ defined as above,
$$\sum_{x\in S}|W_x|=|S|(n'-1)\ge (|S|-|S'|)n'+|S'|\gamma_{\rm sp}(H),$$
which implies that
\begin{align*}
|W|=& \sum_{x\in X'}|W_x|+ \sum_{x\not\in X'}|W_x|\\
    \ge & |X'|\gamma_{\rm sp}(H)+(n-|X'|)n'\\
    = & nn'-|X'|(n'-\gamma_{\rm sp}(H))\\
       \ge & nn'-\alpha(P_n)(n'-\gamma_{\rm sp}(H))\\
        =&\left \lceil  \frac{n}{2}\right\rceil \gamma_{\rm sp}(H)+n'\left \lfloor  \frac{n}{2}\right\rfloor,
\end{align*}

Therefore, 
$\gamma_{\rm sp}(P_n \circ H) =\left \lceil  \frac{n}{2}\right\rceil \gamma_{\rm sp}(H)+n'\left \lfloor  \frac{n}{2}\right\rfloor.$
\end{proof}

\subsection{Super domination in join graphs }\label{SubsectionJoin}

Since $K_n+K_{n'}=K_{n+n'}$ and $N_n+N_{n'}=K_{n,n'}$, in this section we consider the case of join graphs $G+H$ where $G$ and $H$ are not simultaneously complete nor empty.

Given a graph $G$, a set  $X\subseteq V(G)$ and  a vertex $y\in \overline{X}$, we denote the set of external neighbours of $y$ with respect to $\overline{X}$ by
$$F_X(y)=\{x\in X: \, N(x)\cap \overline{X}=\{y\}\}.$$

\begin{theorem}\label{MainTheoremJoin}
Let  $G$ and $H$ be two nonempty  and noncomplete graphs of order $n$ and  $n'$, respectively. Then
$$\gamma_{\rm sp}(G+H)=\min\{n+n'-2,n+\gamma_{\rm sp}(H),n'+\gamma_{\rm sp}(G)\}.$$
\end{theorem}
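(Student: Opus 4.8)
The plan is to prove the two inequalities separately. For the upper bound $\gamma_{\rm sp}(G+H)\le\min\{n+n'-2,\,n+\gamma_{\rm sp}(H),\,n'+\gamma_{\rm sp}(G)\}$, I would exhibit a super dominating set of the appropriate size for each of the three quantities. First, for $n+n'-2$: since $G$ is noncomplete there exist nonadjacent $x_1,x_2\in V(G)$, and similarly nonadjacent $y_1,y_2\in V(H)$; take $D=V(G+H)\setminus\{x_1,y_1\}$. One checks that $x_2\in D$ has $N(x_2)\cap\overline D=\{y_1\}$ (because $x_2$ is joined to all of $H$ but $x_1\notin\overline D$... wait, one must be careful — actually the external private neighbour of $y_1$ should be a vertex whose only non-$D$ neighbour is $y_1$; $x_2$ works since $x_2$ is adjacent to $y_1$ and to no other vertex of $\overline D=\{x_1,y_1\}$ as $x_1x_2\notin E(G)$), and symmetrically $y_2$ serves as the external private neighbour of $x_1$. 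Second, for $n+\gamma_{\rm sp}(H)$: if $S$ is a $\gamma_{\rm sp}(H)$-set, take $D=V(G)\cup S$; for $y\in\overline D\subseteq V(H)$ pick its external private neighbour $v\in S$ inside $H$ — it remains an external private neighbour in $G+H$ because every vertex of $V(G)\subseteq D$ absorbs the extra adjacencies. The bound $n'+\gamma_{\rm sp}(G)$ is symmetric.

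For the lower bound, let $W$ be a $\gamma_{\rm sp}(G+H)$-set and write $W_G=W\cap V(G)$, $W_H=W\cap V(H)$; set $\overline{W}_G=V(G)\setminus W_G$ and $\overline{W}_H=V(H)\setminus W_H$. The key structural observation is that in $G+H$ every vertex of $V(G)$ is adjacent to every vertex of $V(H)$, so if both $\overline{W}_G$ and $\overline{W}_H$ are nonempty, then no vertex of $V(G)$ can be an external private neighbour of anything (it has at least one neighbour in $\overline{W}_H$ and at least one in $\overline{W}_G$ if $|\overline{W}_G|\ge 2$, and even with $|\overline{W}_G|=1$ its neighbour in $\overline{W}_H$ plus the across-edges force $|N(v)\cap\overline W|\ge 2$ unless things are very constrained). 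I would split into cases according to whether $\overline{W}_G$ and $\overline{W}_H$ are empty. Case (i): both nonempty. Then I claim $|\overline{W}_G|\le 1$ and $|\overline{W}_H|\le 1$ (an analogue of Lemma \ref{adjacentcopies}, since any $x\in V(G)$ and $y\in V(H)$ are adjacent), giving $|\overline W|\le 2$, i.e. $|W|\ge n+n'-2$. Case (ii): $\overline{W}_H=\emptyset$, so $V(H)\subseteq W$; then for each $y\in\overline{W}_G$ its external private neighbour must lie in $V(G)$ (a vertex of $V(H)$ is adjacent to all of $\overline{W}_G$, so it cannot be a private neighbour unless $|\overline{W}_G|=1$ — handle that sub-case too), which forces $W_G$ to be a super dominating set of $G$, hence $|W_G|\ge\gamma_{\rm sp}(G)$ and $|W|\ge n'+\gamma_{\rm sp}(G)$. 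Case (iii): $\overline{W}_G=\emptyset$ is symmetric and yields $|W|\ge n+\gamma_{\rm sp}(H)$.

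I expect the main obstacle to be the careful bookkeeping in Case (i) and in the sub-cases of (ii)/(iii) where $|\overline{W}_G|=1$ (or $|\overline{W}_H|=1$): when exactly one vertex is outside $W$ on one side, a vertex on the other side can legitimately be its external private neighbour, so one must verify that this situation still produces $|W|\ge n+n'-2$ (indeed $|\overline W|\le 2$ there as well) and reconcile it with the minimum. The other delicate point is establishing the Lemma \ref{adjacentcopies}-type bound in the join setting — namely that two vertices $y,y'\in\overline{W}_H$ together with any $x\in\overline{W}_G$ would give $N(x)\supseteq$ a set forcing a contradiction, or that three vertices in $\overline{W}_H$ alone already contradict super domination via the all-adjacency to $V(G)$; once this is in place, the three cases assemble cleanly into $\gamma_{\rm sp}(G+H)\ge\min\{n+n'-2,\,n+\gamma_{\rm sp}(H),\,n'+\gamma_{\rm sp}(G)\}$, and combined with the upper bound the equality follows.
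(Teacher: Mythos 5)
Your proposal is correct and follows essentially the same route as the paper: the same three explicit super dominating sets ($V(G)\cup S_2$, $V(H)\cup S_1$, and the whole vertex set minus one nonuniversal vertex from each factor) for the upper bound, and the same three-case split on whether $\overline{W}$ meets $V(G)$ and $V(H)$ for the lower bound, with the mixed case forced down to $|\overline{W}|=2$ by the join adjacencies and the one-sided cases reduced to a Lemma~\ref{important}-type argument. The sub-cases you flag as delicate (e.g.\ $|\overline{W}_G|=1$ with a private neighbour on the other side) resolve exactly as you anticipate, so no gap remains.
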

\begin{proof}
Let $S_1$ be a $\gamma_{\rm sp}(G)$-set and $S_2$  a $\gamma_{\rm sp}(H)$-set. Let  $g\in V(G)$ and $h\in V(H)$ be nonuniversal vertices of $G$ and $H$, respectively. It is readily seen that $V(G)\cup S_2$, $V(H)\cup S_1$ and $(V(G)\cup V(H))\setminus \{g,h\}$ are super dominating sets of $G+H$, so that 
\begin{equation}\label{UperBoundJoin}
\gamma_{\rm sp}(G+H)\le \min\{n+n'-2,n+\gamma_{\rm sp}(H),n'+\gamma_{\rm sp}(G)\}.
\end{equation}

Now we take a   $\gamma_{\rm sp}(G+H)$-set  $W$ and differentiate the following three cases.

\noindent Case 1. $V(G)\cap \overline{W}\ne \emptyset$ and $V(H)\cap \overline{W} \ne \emptyset$. If $g\in V(G)\cap \overline{W}$ and $h\in V(H)\cap \overline{W}$, then $F_W(g)\subseteq V(H)$ and  $F_W(h)\subseteq V(G)$,   which implies that $\overline{W}=\{g,h\}$. Hence,  $\gamma_{\rm sp}(G+H)= n+n'-2.$ 

\noindent Case 2. $\overline{W} \subseteq V(G).$ In this case, by analogy to the proof of Lemma \ref{important} we deduce that 
$|W\cap V(G)|\ge \gamma_{\rm sp}(G)$, which implies that  $|W|\ge n' +\gamma_{\rm sp}(G)$ and  by \eqref{UperBoundJoin} we deduce that
$\gamma_{\rm sp}(G+H)= n'+\gamma_{\rm sp}(G).$

\noindent Case 3. $\overline{W} \subseteq V(H).$
This case is analogous to the previous one, 
so that
$\gamma_{\rm sp}(G+H)= n+\gamma_{\rm sp}(H).$

According to the three cases above, the result follows.
\end{proof}

Since $K_n+N_{n'}\in \mathcal{F}$, by Theorem \ref{familyF} we have that $\gamma_{\rm sp}(K_n+N_{n'})=n+n'-1$. Hence, it remains to study the cases $K_n+H$ and $N_n+H$ where $H\not \in \{N_{n'},K_{n'}\}$.

\begin{theorem}
Let $H$ be a graph of order $n'$. If $H\not \in \{N_{n'},K_{n'}\}$, then for any integer $n\ge 1$,
$$ \gamma_{\rm sp}(K_n+H)=n+\gamma_{\rm sp}(H).$$
\end{theorem}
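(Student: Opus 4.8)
The plan is to prove the two inequalities separately. For the upper bound $\gamma_{\rm sp}(K_n+H)\le n+\gamma_{\rm sp}(H)$, I would take a $\gamma_{\rm sp}(H)$-set $S$ and check that $V(K_n)\cup S$ is a super dominating set of $K_n+H$: every vertex of $\overline{V(K_n)\cup S}$ lies in $V(H)$, and its external private neighbour inside $S$ (which exists because $S$ super-dominates $H$) still works in $K_n+H$, since the vertices added to form $\overline{W}$ all lie in $V(H)$ and a private neighbour relation in $H$ is preserved when we pass to the induced subgraph on $\{$copy of $H\}$. This is essentially the argument already used in the second part of Theorem~\ref{MainTheoremJoin}, so it is routine.

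For the lower bound $\gamma_{\rm sp}(K_n+H)\ge n+\gamma_{\rm sp}(H)$, let $W$ be a $\gamma_{\rm sp}(K_n+H)$-set and write $\overline{W}=V(K_n+H)\setminus W$. The key structural observation is that $\overline{W}\cap V(K_n)$ has at most one element: if two distinct vertices $u,u'\in V(K_n)$ were both in $\overline{W}$, then since $u$ is universal in $K_n+H$ we would have $N(u)\cap\overline{W}\supseteq\{u'\}$ and also $N(u')\cap\overline{W}\supseteq\{u\}$, and more importantly any purported external private neighbour of a vertex of $\overline{W}$ inside $V(K_n)$ is adjacent to \emph{every} other vertex of $\overline{W}$, making the private-neighbour condition fail unless $|\overline{W}|\le 1$; I would check that in fact having two vertices of $V(K_n)$ outside $W$ forces a contradiction with the existence of external private neighbours. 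So either $\overline{W}\subseteq V(H)$, or $\overline{W}$ meets $V(K_n)$ in exactly one vertex $u$ and possibly meets $V(H)$.

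In the first case, $\overline{W}\subseteq V(H)$, arguing exactly as in Lemma~\ref{important} (or Case~2 of Theorem~\ref{MainTheoremJoin}) gives $|W\cap V(H)|\ge\gamma_{\rm sp}(H)$, hence $|W|\ge n+\gamma_{\rm sp}(H)$. In the remaining case, $\overline{W}\cap V(K_n)=\{u\}$; here I would argue that the external private neighbour of $u$ with respect to $\overline{W}$ must lie in $V(H)$ (a vertex of $K_n$ distinct from $u$ is adjacent to $u$ but also to every vertex of $V(H)\cap\overline{W}$, so it can only serve as $u$'s private neighbour when $\overline{W}=\{u\}$, giving $|W|=n+n'-1\ge n+\gamma_{\rm sp}(H)$ since $\gamma_{\rm sp}(H)\le n'-1$). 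If instead $\overline{W}\cap V(H)\ne\emptyset$, then for each $v\in\overline{W}\cap V(H)$, every vertex of $V(K_n)\cap W$ is adjacent to both $u$ and $v$, so the external private neighbour of $v$ must lie in $V(H)\cap W$; running the Lemma~\ref{important}-style argument on the copy of $H$ shows $W\cap V(H)$ super-dominates $H$, so $|W\cap V(H)|\ge\gamma_{\rm sp}(H)$ and again $|W|\ge(n-1)+\gamma_{\rm sp}(H)+1=n+\gamma_{\rm sp}(H)$, using $|W\cap V(K_n)|=n-1$.

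The main obstacle is the careful bookkeeping in the mixed case $\overline{W}\cap V(K_n)=\{u\}$ and $\overline{W}\cap V(H)\ne\emptyset$: one must verify that no vertex of $V(K_n)$ can act as a private neighbour for a vertex of $\overline{W}$ once $|\overline{W}|\ge 2$, which pins all the private-neighbour "work" onto vertices inside the $H$-copy and lets the proof of Lemma~\ref{important} be reused. I expect everything else — the upper bound and the pure cases $\overline{W}\subseteq V(H)$ and $\overline{W}=\{u\}$ — to be short. I would also remark at the start that the case $n=1$ reduces to $\gamma_{\rm sp}(K_1+H)=\gamma_{\rm sp}(H)+1$ by the same reasoning, and that (via Theorem~\ref{MainTheoremJoin}, since $K_n$ is complete) this theorem complements the previously handled noncomplete-noncomplete case.
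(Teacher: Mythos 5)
Your overall strategy---the upper bound via $V(K_n)\cup S$ and a lower-bound case analysis on how $\overline{W}$ meets $V(K_n)$---is essentially the paper's, and the upper bound together with the cases $\overline{W}\subseteq V(H)$ and $\overline{W}=\{u\}$ are handled correctly. The gap is in the mixed case $\overline{W}\cap V(K_n)=\{u\}$ with $\overline{W}\cap V(H)\ne\emptyset$. First, the bookkeeping there does not close: from $|W\cap V(K_n)|=n-1$ and $|W\cap V(H)|\ge\gamma_{\rm sp}(H)$ you only get $|W|\ge n-1+\gamma_{\rm sp}(H)$, which is short of the target; the extra ``$+1$'' in your final display is not justified by anything you establish. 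Second---and this is the observation that both repairs the argument and is what the paper actually uses---the mixed case cannot occur at all. A vertex $u\in V(K_n)$ is universal in $K_n+H$, so every vertex $w\ne u$ satisfies $u\in N(w)$; hence if $u\in\overline{W}$ and $v\in\overline{W}\setminus\{u\}$, no vertex $w$ can satisfy $N(w)\cap\overline{W}=\{v\}$, and $v$ has no external private neighbour. Your own reasoning notices this obstruction for $w\in V(K_n)\cap W$ but overlooks that a candidate $w\in V(H)\cap W$ is also adjacent to $u$ through the join edges, so it fails for the same reason. Consequently $u\in\overline{W}$ forces $\overline{W}=\{u\}$: either $\overline{W}\subseteq V(H)$ (your first case) or $|W|=n+n'-1\ge n+\gamma_{\rm sp}(H)$, using $\gamma_{\rm sp}(H)\le n'-1$ from Theorem \ref{theorem1} since $H$ is nonempty. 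With the mixed case eliminated, your remaining two cases finish the proof exactly as the paper does.
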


\begin{proof}
Let $S$ be a $\gamma_{\rm sp}(H)$-set. It is readily seen that $V(K_n)\cup S$ is a super dominating sets of $K_n+H$, so that 
\begin{equation}\label{UperBoundKn+H}
\gamma_{\rm sp}(K_n+H)\le n+\gamma_{\rm sp}(H)\le n+n'-1.
\end{equation}

Now, let $W$ be a   $\gamma_{\rm sp}(K_n+H)$-set. Since the vertices in $V(K_n)$ are universal vertices of $K_n+H$, 
 if $V(K_n)\cap \overline{W}\ne \emptyset$, then $V(H)\cap \overline{W} = \emptyset$ and, in such a case, $\gamma_{\rm sp}(K_n+H)= n+n'-1$, so that \eqref{UperBoundKn+H} leads to $
  \gamma_{\rm sp}(K_n+H)= n+\gamma_{\rm sp}(H)$.
 On the other hand, if   $\overline{W} \subseteq V(H)$, then by analogy to the proof of Lemma \ref{important} we deduce that 
$|W\cap V(H)|\ge \gamma_{\rm sp}(H)$, which implies that  $|W|\ge n +\gamma_{\rm sp}(H)$ and  by \eqref{UperBoundKn+H} we deduce that
$\gamma_{\rm sp}(K_n+H)= n+\gamma_{\rm sp}(H).$
Therefore, the result follows.
\end{proof}

\begin{theorem}
Let $H$ be a graph of order $n'$. If $H\not \in \{N_{n'},K_{n'}\}$, then for any integer $n\ge 2$,
$$ \gamma_{\rm sp}(N_n+H)=\min\{n'+n-2,n+\gamma_{\rm sp}(H)\}.$$
\end{theorem}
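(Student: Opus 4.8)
The plan is to mimic the structure of Theorem~\ref{MainTheoremJoin}, exploiting the fact that $N_n+H\cong K_2\circ\{N_n,H\}$ is a join with one empty factor. First I would establish the upper bound $\gamma_{\rm sp}(N_n+H)\le\min\{n'+n-2,n+\gamma_{\rm sp}(H)\}$ by exhibiting explicit super dominating sets. For the term $n+\gamma_{\rm sp}(H)$, take $V(N_n)\cup S$ where $S$ is a $\gamma_{\rm sp}(H)$-set; every vertex of $\overline{H}\cap\overline{W}$ keeps its external private neighbour inside the copy of $H$ since the added vertices of $N_n$ are adjacent to everything in $H$. For the term $n'+n-2$, note that $H$ is noncomplete, so it has two nonadjacent vertices $h_1,h_2$; since $H$ is also nonempty, it has an edge, and one checks that $(V(N_n)\cup V(H))\setminus\{h_1,h_2\}$ works, or alternatively remove one vertex from each side: pick a nonisolated $h\in V(H)$ (which exists as $H$ is nonempty) and any $a\in V(N_n)$, then $(V(N_n)\cup V(H))\setminus\{a,h\}$ is super dominating because $a$ gets the external private neighbour $h$ (as $a$ is universal in $N_n+H$, but its only non-selected neighbour is $h$) and $h$ gets a neighbour in $N_n$ as external private neighbour provided that neighbour has no other non-selected neighbour --- here one must be slightly careful, so the cleaner choice is to delete two nonadjacent vertices of $H$ as in the proof of Theorem~\ref{MainTheoremJoin}, Case~1.

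Next I would prove the matching lower bound. Let $W$ be a $\gamma_{\rm sp}(N_n+H)$-set. The key observation is that every vertex of $N_n$ is universal in $N_n+H$ (it is adjacent to all of $V(H)$ and, after the join, to... actually no: two vertices of $N_n$ are nonadjacent). The right dichotomy is on $\overline{W}$. Case analysis: either $\overline{W}\cap V(N_n)=\emptyset$, or $\overline{W}\cap V(N_n)\ne\emptyset$. In the first case $\overline{W}\subseteq V(H)$, and by the argument of Lemma~\ref{important} (the external private neighbour of a vertex $h\in\overline{W}\cap V(H)$ must lie in $\{h\}$'s own copy, i.e.\ in $V(H)\cap W$, because all vertices outside $V(H)$ are adjacent to every vertex of $V(H)$ and hence cannot have $\{h\}$ as their unique non-selected neighbour when $|\overline W|\ge 2$ — and if $|\overline W|=1$ the bound is trivial), we get $|W\cap V(H)|\ge\gamma_{\rm sp}(H)$, hence $|W|\ge n+\gamma_{\rm sp}(H)$. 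In the second case, pick $a\in\overline{W}\cap V(N_n)$; then $F_W(a)\subseteq V(N_n)$ is impossible since vertices of $N_n$ other than $a$ are nonadjacent to $a$, so $F_W(a)\subseteq V(H)$, and since any vertex of $H$ is adjacent to the whole of $V(N_n)$, a vertex $x\in F_W(a)$ has $N(x)\cap\overline W=\{a\}$, forcing $\overline W\cap V(N_n)=\{a\}$ and $\overline W\cap V(H)\subseteq$ the non-neighbours of $x$ in $H$. Then applying Lemma~\ref{adjacentcopies}-type reasoning within the $H$-copy (two non-selected vertices of $H$ both adjacent to $a$) forces $|\overline W\cap V(H)|\le 1$, whence $|\overline W|\le 2$ and $|W|\ge n+n'-2$.

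Combining the two cases gives $\gamma_{\rm sp}(N_n+H)\ge\min\{n+\gamma_{\rm sp}(H),\,n+n'-2\}$, which together with the upper bound yields equality. The main obstacle I anticipate is the bookkeeping in the second lower-bound case: precisely controlling $\overline W\cap V(H)$ once a vertex $a\in V(N_n)$ is left out, and in particular verifying that no vertex of $H$ outside $W$ can serve as an external private neighbour host when $a$ is already demanding one — this is where the absence of edges inside $N_n$ is essential and must be invoked carefully, analogous to Case~1 of Theorem~\ref{MainTheoremJoin}. A secondary subtlety is the degenerate situation where $\gamma_{\rm sp}(H)=n'-1$: then $n+\gamma_{\rm sp}(H)=n+n'-1>n+n'-2$, the minimum is $n+n'-2$, and one should double-check that the construction deleting two nonadjacent vertices of $H$ indeed beats the naive $V(N_n)\cup S$ construction, which it does since $H$ being noncomplete guarantees such a pair exists.
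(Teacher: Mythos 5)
Your overall strategy and your lower bound are sound and coincide with the paper's route (the paper's own proof is terse: it notes $|\overline{W}\cap V(N_n)|\le 1$, deduces $\overline{W}\cap V(H)\ne\emptyset$, and then defers to the case analysis of Theorem \ref{MainTheoremJoin}). The genuine problem is in your upper bound for the term $n+n'-2$, where you end up endorsing the wrong construction. Deleting two nonadjacent vertices $h_1,h_2$ of $H$ does \emph{not} work in general: every vertex of $N_n$ is adjacent to both $h_1$ and $h_2$, so an external private neighbour of $h_1$ would have to be a vertex of $H$ adjacent to $h_1$ but not to $h_2$, and such a vertex need not exist. Concretely, for $H=K_{1,2}$ with leaves $h_1,h_2$ we have $N_H(h_1)=N_H(h_2)$, so $(V(N_n)\cup V(H))\setminus\{h_1,h_2\}$ is not a super dominating set of $N_n+K_{1,2}$, even though the theorem asserts $\gamma_{\rm sp}(N_n+K_{1,2})=n+1$. (Note also that in Case 1 of Theorem \ref{MainTheoremJoin} the two deleted vertices are one from each factor, not both from $H$, so the precedent you cite does not support your ``cleaner choice.'')

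The construction you half-propose and then abandon is the correct one, except that the hypothesis on $h$ must be ``non-universal in $H$'' rather than ``nonisolated.'' Take $W=(V(N_n)\cup V(H))\setminus\{a,h\}$ with $a\in V(N_n)$ arbitrary and $h\in V(H)$ non-universal in $H$ (such an $h$ exists because $H\ne K_{n'}$). Then any $v\in V(H)\setminus N_H[h]$ satisfies $N(v)\cap\overline{W}=\{a\}$, and any $a'\in V(N_n)\setminus\{a\}$ (here $n\ge 2$ is used) satisfies $N(a')\cap\overline{W}=\{h\}$; the worry you raise about $a'$ having another non-selected neighbour evaporates precisely because $N_n$ is empty, so $a'\not\sim a$ and $N(a')=V(H)$. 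With ``nonisolated'' in place of ``non-universal'' the argument for $a$'s private neighbour breaks down: if $h$ were universal in $H$, no $v\in V(H)$ would avoid $h$. For the record, the paper obtains $\gamma_{\rm sp}(N_n+H)\le n+n'-2$ by a different device, namely observing that $N_n+H\notin\mathcal{F}$ and invoking Theorem \ref{familyF}; either route is fine once your construction is repaired.
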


\begin{proof}
Let $W$ be a $\gamma_{\rm sp}(N_n+H)$-set. Notice that $|\overline{W}\cap V(N_n)|\le 1$. Since $H\not \in \{N_{n'},K_{n'}\}$, by  Theorem \ref{familyF} we deduce that 
$ \gamma_{\rm sp}(N_n+H)\le n'+n-2$, which implies that $\overline{W}\cap V(H)\ne \emptyset$. With this fact in mind, and following a procedure analogous to that in the proof of Theorem \ref{MainTheoremJoin}, we conclude the proof.
\end{proof}


\begin{thebibliography}{10}
\expandafter\ifx\csname url\endcsname\relax
  \def\url#1{\texttt{#1}}\fi
\expandafter\ifx\csname urlprefix\endcsname\relax\def\urlprefix{URL }\fi

\bibitem{MR3200151}
B.~H. Arriola, S.~R. Canoy, Jr., Doubly connected domination in the corona and
  lexicographic product of graphs, Appl. Math. Sci. (Ruse) 8~(29-32) (2014)
  1521--1533.
\newline\urlprefix\url{http://dx.doi.org/10.12988/ams.2014.4136}

\bibitem{RV-F-2013}
H.~Fernau, J.~A. Rodr{\'{\i}}guez-Vel{\'a}zquez, On the (adjacency) metric
  dimension of corona and strong product graphs and their local variants:
  combinatorial and computational results, arXiv:1309.2275 [math.CO].
\newline\urlprefix\url{http://arxiv-web3.library.cornell.edu/abs/1309.2275}

\bibitem{MR3218546}
H.~Fernau, J.~A. Rodr{\'i}guez-Vel{\'a}zquez, Notions of Metric Dimension of
  Corona Products: Combinatorial and Computational Results, Springer
  International Publishing, Cham, 2014, pp. 153--166.
\newline\urlprefix\url{http://dx.doi.org/10.1007/978-3-319-06686-8_12}

\bibitem{MR812672}
J.~F. Fink, M.~S. Jacobson, On {$n$}-domination, {$n$}-dependence and forbidden
  subgraphs, in: Graph theory with applications to algorithms and computer
  science ({K}alamazoo, {M}ich., 1984), Wiley-Intersci. Publ., Wiley, New York,
  1985, pp. 301--311.

\bibitem{Gallai1959}
T.~Gallai, {\"{U}ber extreme Punkt- und Kantenmengen}, Annales Universitatis
  Scientarium Budapestinensis de Rolando E\"{o}tv\"{o}s Nominatae, Sectio
  Mathematica 2 (1959) 133--138.

\bibitem{Garey1979}
M.~R. Garey, D.~S. Johnson, Computers and Intractability: A Guide to the Theory
  of NP-Completeness, W. H. Freeman \& Co., New York, NY, USA, 1979.
\newline\urlprefix\url{http://dl.acm.org/citation.cfm?id=578533}

\bibitem{Hammack2011}
R.~Hammack, W.~Imrich, S.~Klav{\v{z}}ar, Handbook of product graphs, Discrete
  Mathematics and its Applications, 2nd ed., CRC Press, 2011.
\newline\urlprefix\url{http://www.crcpress.com/product/isbn/9781439813041}

\bibitem{Harary1969}
F.~Harary, Graph theory, Addison-Wesley Publishing Co., Reading, Mass.-Menlo
  Park, Calif.-London, 1969.
\newline\urlprefix\url{http://books.google.es/books/about/Graph_Theory.html?id=9nOljWrLzAAC&redir_esc=y}

\bibitem{Haynes1998a}
T.~Haynes, S.~Hedetniemi, P.~Slater, Domination in Graphs: Volume 2: Advanced
  Topics, Chapman \& Hall/CRC Pure and Applied Mathematics, Taylor \& Francis,
  1998.
\newline\urlprefix\url{http://books.google.es/books?id=iBFrQgAACAAJ}

\bibitem{Haynes1998}
T.~W. Haynes, S.~T. Hedetniemi, P.~J. Slater, Fundamentals of Domination in
  Graphs, Chapman and Hall/CRC Pure and Applied Mathematics Series, Marcel
  Dekker, Inc. New York, 1998.
\newline\urlprefix\url{http://books.google.es/books?id=Bp9fot\_HyL8C}

\bibitem{Imrich2000}
W.~Imrich, S.~Klav\v{z}ar, Product graphs, structure and recognition,
  Wiley-Interscience series in discrete mathematics and optimization, Wiley,
  2000.
\newline\urlprefix\url{http://books.google.es/books?id=EOnuAAAAMAAJ}

\bibitem{MR3396565}
M.~Lema\'nska, V.~Swaminathan, Y.~B. Venkatakrishnan, R.~Zuazua, Super
  dominating sets in graphs, Proc. Nat. Acad. Sci. India Sect. A 85~(3) (2015)
  353--357.
\newline\urlprefix\url{http://dx.doi.org/10.1007/s40010-015-0208-2}

\bibitem{MR3363260}
J.~Liu, X.~Zhang, J.~Meng, Domination in lexicographic product digraphs, Ars
  Combin. 120 (2015) 23--32.

\bibitem{Nowakowski1996}
R.~J. Nowakowski, D.~F. Rall, Associative graph products and their
  independence, domination and coloring numbers, Discussiones Mathematicae
  Graph Theory 16 (1996) 53--79.
\newline\urlprefix\url{http://www.discuss.wmie.uz.zgora.pl/php/discuss3.php?ip=&url=pdf&nIdA=3544&nIdSesji=-1}

\bibitem{SUmenjak:2012:RDL:2263360.2264103}
T.~K. {\v{S}}umenjak, P.~Pavli\v{c}, A.~Tepeh, On the {R}oman domination in the
  lexicographic product of graphs, Discrete Appl. Math. 160~(13-14) (2012)
  2030--2036.
\newline\urlprefix\url{http://dx.doi.org/10.1016/j.dam.2012.04.008}

\bibitem{MR3057019}
T.~K. {\v{S}}umenjak, D.~F. Rall, A.~Tepeh, Rainbow domination in the
  lexicographic product of graphs, Discrete Appl. Math. 161~(13-14) (2013)
  2133--2141.
\newline\urlprefix\url{http://dx.doi.org/10.1016/j.dam.2013.03.011}

\bibitem{Zykov1949}
A.~A. Zykov, On some properties of linear complexes, Matemati\v{c}eski\v{i}
  Sbornik (N.S.) 24(66) (1949) 163--188.

\end{thebibliography}
\end{document}